\def\ch{\textrm{ch}}
\def\chio{\chi_o}
\newtheorem{thm}{Theorem}
\newtheorem{clm}{Claim}
\newtheorem{conj}{Conjecture}
\newtheorem{cor}[thm]{Corollary}
\theoremstyle{definition}
\newcommand{\aside}[1]{\marginnote{\scriptsize{#1}}[0cm]}
\newcommand{\aaside}[2]{\marginnote{\scriptsize{#1}}[#2]}
\newcommand\Emph[1]{\emph{#1}\aside{#1}}
\def\less{\setminus}
\title{A Note on Odd Colorings of 1-Planar Graphs}
\author{Daniel W. Cranston\thanks{Department of Computer Science, Virginia
Commonwealth University, Richmond, VA, USA. E-mail: {\tt dcranston@vcu.edu}.} 
\hskip 1cm   Michael Lafferty\thanks{Department  of Mathematics, University of
Central Florida, Orlando, FL 32816, USA. Supported in part by NSF award
DMS-1854903. E-mail: {\tt Michael.Lafferty@ucf.edu}.} \hskip 1cm  Zi-Xia
Song\thanks{Department  of Mathematics, University of Central Florida, Orlando,
FL 32816, USA. Supported by  NSF award DMS-1854903. E-mail: {\tt Zixia.Song@ucf.edu}.}}
\date{}
\begin{document}
\maketitle
\begin{abstract}
A proper coloring of a graph is \emph{odd}  if every non-isolated vertex has
some color that appears an odd number of times on its neighborhood. This notion
was recently  introduced by  Petru\v{s}evski and \v{S}krekovski, who proved
that  every planar graph admits  an odd $9$-coloring;  they also conjectured
that every planar graph admits  an odd $5$-coloring.  Shortly after, this
conjecture was confirmed   for  planar graphs of girth at least seven by
Cranston; outerplanar graphs by Caro, Petru\v{s}evski, and \v{S}krekovski. 
Building on the work of  Caro, Petru\v{s}evski, and \v{S}krekovski, Petr and
Portier then further proved that  every planar graph admits an odd
$8$-coloring.  In this note we prove that every   1-planar graph admits an odd 
${23}$-coloring, where a graph is \emph{1-planar} if it can be drawn in the plane
so that each   edge is crossed by at most one other edge.
\end{abstract}

\noindent \textbf{Keywords:}  proper coloring, odd coloring, 1-planar graph.

\baselineskip 16pt

\section{Introduction}

All  graphs considered in this paper are simple, finite, and undirected.  Let
$G$ be a graph.  We use $|G|$ and $e(G)$\aside{$|G|$, $e(G)$} to denote the number of vertices and
edges  in $G$, respectively. A proper coloring of $G$  is \Emph{odd}  if every
non-isolated vertex has some color that appears an odd number of times on its
neighborhood.  The \emph{odd chromatic number of $G$}, denoted
\Emph{$\chi_o(G)$}, is
the smallest number $k$ such that $G$ admits an odd $k$-coloring. 
Clearly, $\chio(G)\le |G|$, since we can simply color each vertex with its own
color.  The notion of an odd coloring was recently  introduced by
Petru\v{s}evski and \v{S}krekovski~\cite{PetSkr22}.  Using the discharging
method, they showed that $\chi_o(G)\leq 9$ for all planar graphs $G$.
Furthermore, they conjectured that all planar graphs admit an odd coloring
using $5$ colors. Fewer colors cannot suffice, since $\chi_o(C_5)=5$.

\begin{conj}[Petru\v{s}evski and \v{S}krekovski~\cite{PetSkr22}]\label{c:odd5}
Every planar graph admits an odd $5$-coloring.
\end{conj}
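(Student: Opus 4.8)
The plan is to argue by the discharging method applied to a minimal counterexample. Suppose, for contradiction, that $G$ is a planar graph with no odd $5$-coloring and with $|G|+e(G)$ minimum, and fix a plane embedding. Minimality forces $G$ to be connected, to have no separating structures that could be handled piecewise, and — crucially — to avoid certain small \emph{reducible} configurations, since every proper subgraph of $G$ does admit an odd $5$-coloring. Assign to each vertex $v$ the charge $\deg(v)-6$ and to each face $f$ the charge $2\ell(f)-6$, where $\ell(f)$ is the length of $f$; by Euler's formula the total charge is exactly $-12<0$. The aim is to design discharging rules that redistribute this charge so that, once all reducible configurations are forbidden, every vertex and every face ends with nonnegative charge, contradicting the negative total.

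The crux is identifying a list of reducible configurations rich enough to drive the discharging, and here the odd-coloring condition makes life far harder than for ordinary proper $5$-coloring. For a proper coloring, a low-degree vertex $v$ is easy: delete $v$, color $G-v$ by minimality, and give $v$ any color missing from $N(v)$. For an odd coloring this is insufficient, because placing a color $c$ on $v$ does two delicate things at once. First, $v$ itself must see some color an odd number of times on $N(v)$; second, adding $v$ to the neighborhood of each neighbor $u$ flips the parity of $c$ in $N(u)$, so if $c$ was the sole color witnessing oddness at $u$, that witness is destroyed. Thus every reducibility argument must simultaneously (i) supply $v$ with a legal odd color and (ii) preserve or repair an odd witness at each neighbor. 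The natural strategy is to track, for each vertex, a reservoir of candidate odd-witness colors and to choose $c$ so as not to be the unique witness of any neighbor — but with only five colors this reservoir is extremely thin.

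The key steps, in order, would be: first, I would prove structural lemmas bounding the local picture around vertices of degree $2,3,4,5$ (for instance, that a $2$-vertex cannot have both neighbors forced to the same color, that adjacent small-degree vertices are severely restricted, and so on), each justified by a delete-and-recolor argument that respects both the proper and the parity constraints; second, from these lemmas I would assemble a family of \emph{unavoidable} reducible configurations; third, I would choose discharging rules — typically sending charge from long faces and high-degree vertices toward short faces and small-degree vertices — and verify that the final charge at every vertex and face is nonnegative. Essentially all of the work, and all of the novelty, lives in the first two steps.

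The main obstacle I expect — and the reason this statement is posed only as a conjecture rather than proved here — is precisely the reducibility step. With only five colors, the parity bookkeeping at the neighbors leaves almost no slack, so the comfortable delete-and-recolor arguments that establish reducibility for $8$ or $9$ colors collapse, and no sufficiently powerful family of reducible configurations is currently known to make the discharging close. The hard part will be finding a genuinely new reducibility idea: recoloring within larger neighborhoods, a potential or weighting that packages the proper and the odd constraints into a single quantity, or an amortized ``parity-debt'' argument that lets one vertex borrow an odd witness from another — rather than merely a refinement of the existing discharging rules.
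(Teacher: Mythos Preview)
This statement is a \emph{conjecture} in the paper, not a theorem; the paper offers no proof of it whatsoever, and indeed the surrounding discussion makes clear that it remains open (the best known bound for planar graphs being $8$). There is therefore no ``paper's own proof'' against which to compare your proposal.

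You yourself recognize this: your write-up is not a proof but a sketch of a discharging framework, and you explicitly identify that the reducibility step collapses with only five colors and that ``no sufficiently powerful family of reducible configurations is currently known.'' That diagnosis is accurate and matches the state of the art as reflected in the paper. So your submission is honest and well-reasoned as commentary, but it is not a proof, and none was expected here.
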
 

Shortly after, Cranston~\cite{oddsparse} focused on the odd chromatic number of
sparse graphs, obtaining, for instance, $\chi_o(G)\leq 6$ for planar graphs $G$
of girth at least $6$ and $\chi_o(G)\leq 5$ for planar graphs $G$ of girth at
least $7$. Caro, Petru\v{s}evski, and \v{S}krekovski~\cite{CPS2022} studied
various properties of the odd chromatic number; in particular, they proved the following facts:
every outerplanar graph admits an odd $5$-coloring; every graph of maximum
degree three has an odd $4$-coloring; and for every connected planar graph $G$,
if   $|G|$ is even, or $|G|$ is odd and $G$ has a  vertex of degree $2$ or any odd
degree, then $\chi_o(G)\le8$ (a key step towards proving that $8$ colors
suffice for an odd coloring of any planar graph).   It is worth noting that
their proof of this key step relies on Theorem $4$ in Aashtab, Akbari,
Ghanbari, and Shidani \cite{4trees}, which itself relies on the Four-Color
Theorem~\cite{AppHak77, 4CT}.
Building on work of Caro, Petru\v{s}evski, and
\v{S}krekovski~\cite{CPS2022},  Petr and Portier~\cite{odd8} further
proved that 8 colors suffice for all planar graphs; that is, every planar
graph admits an odd $8$-coloring.   

In this paper, we focus on studying odd colorings of $1$-planar graphs,
where a graph is \Emph{1-planar} if it can be drawn in the plane so that each  
edge is crossed by at most one other edge. We prove the following main result.

\begin{thm} \label{t:mainthm}
Every $1$-planar graph admits an odd ${23}$-coloring.
\end{thm}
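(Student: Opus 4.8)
The plan is to argue by minimal counterexample together with discharging. Suppose $G$ is a $1$-planar graph with no odd $23$-coloring, chosen with $|G|$ minimum and then $e(G)$ minimum, and fix a $1$-planar drawing of $G$. Every subgraph of $G$ is again $1$-planar (restrict the drawing), so by minimality every proper subgraph has an odd $23$-coloring. First I would record the easy structural facts ($G$ is connected, and isolated and degree-$1$ vertices cannot occur), and then prove a reducibility lemma asserting that $G$ contains none of a short list of local configurations; the discharging, which follows the spirit of the argument of Petru\v{s}evski and \v{S}krekovski for planar graphs but run on the planarization, then shows such a $G$ cannot exist.

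The core of the reducibility lemma concerns a vertex $v$ of bounded degree. Given an odd $23$-coloring $\phi$ of $G-v$, to color $v$ with a color $c$ I must ensure: (i) $c$ avoids the colors of all neighbors of $v$; (ii) after coloring, some color appears an odd number of times on $N_G(v)$; and (iii) no neighbor $u$ of $v$ is ruined, i.e.\ the neighborhood of $u$ does not lose its unique odd color, which happens precisely when $c$ equals that color, so (iii) forbids at most $d(v)$ further colors. Since (i) and (iii) together forbid at most $2d(v)$ colors, and (ii) is automatic when $d(v)$ is odd (an odd-size multiset cannot have all multiplicities even), every vertex of odd degree at most $11$ is reducible; in particular, if $\delta(G)\le 7$ then every small-degree vertex of $G$ must have even degree.

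The genuinely delicate case is a vertex $v$ of even degree on which $\phi$ happens to use only colors of even multiplicity: then no choice of $c$ makes $v$ happy, and instead I would recolor one neighbor $u$ of $v$ — recoloring $u$ to any other color immediately produces an odd multiplicity on $N_G(v)$ — and only then color $v$. Recoloring $u$ legally (properly, and without destroying the odd color of any neighbor of $u$) costs roughly $2d(u)$ colors, and then coloring $v$ costs roughly $2d(v)$ more, so this succeeds when $v$ has a neighbor $u$ with $d(u)$ and $d(v)$ both below a threshold near $12$ — which is exactly what pins down the constant $23$. Thus the reducible configurations are, in essence: a vertex of small odd degree; a vertex of small even degree with a neighbor of bounded degree; and a few closely related variants, with the smallest cases (e.g.\ $d(v)=2$) needing extra care because $v$ has too few neighbors to rely on for either recoloring or, later, absorbing charge.

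Finally I would show that a $1$-planar graph avoiding all of these configurations cannot exist, by discharging. Passing to the plane graph $G^{\times}$ obtained by turning each crossing of the drawing into a new degree-$4$ vertex, I would assign initial charges summing to a negative constant — for instance $d(x)-8$ at each vertex $x$, using $e(G)\le 4|G|-8$, or the usual Euler-formula charges on the vertices and faces of $G^{\times}$ — and let vertices of large degree send charge to nearby low-degree true vertices and to crossing vertices, then verify that if none of the reducible configurations occurs, every object ends with nonnegative charge, a contradiction. I expect the main obstacle to be the interface between the two halves: making the list of reducible configurations strong enough to carry a clean discharging argument while keeping every reduction within $23$ colors, together with the bookkeeping around crossings (edges meeting two crossings, crossing vertices whose true neighbors are all of small degree, and so on).
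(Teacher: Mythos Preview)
Your overall plan---minimal counterexample, a short list of reducible configurations handled by colouring arguments, then discharging on the planarisation $G^{\times}$---is exactly the paper's strategy, and your two main reductions (odd degree at most $11$; two adjacent small vertices) are correct. Your recolouring argument for the second of these is a legitimate alternative to what the paper does (it deletes both small vertices and colours them back), and it yields the same conclusion that every small vertex has only big neighbours.

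Where the proposal has a genuine gap is in the reducible configurations you have \emph{not} listed, and these are not just bookkeeping. First, the paper proves that \emph{every edge incident to a small vertex is crossed}: if $xy$ is uncrossed with $x$ small, form $G'$ by deleting $x$ and adding an edge $yz$ for each $z\in N(x)\setminus N(y)$, routing each new $yz$ along the old (uncrossed) segment $xy$ and then along $xz$, so $G'$ remains $1$-planar; an odd $23$-colouring of $G'$ extends to $G$ because $\tau(y)$ now appears exactly once on $N(x)$, solving the even-degree parity problem for free. This is the lemma that forces every small vertex of $G^{\times}$ to be surrounded entirely by crossing vertices, hence to lie on no $3$-face and (for $2$-vertices) to lie on a $5^{+}$-face; the discharging genuinely needs these facts, and nothing in your sketch produces them. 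Second, for big vertices of degree $12$--$15$ the crude bound ``at most $d(v)$ incident $3$-faces plus at most $d(v)$ adjacent $2$-vertices, each costing $\tfrac12$'' overruns $d(v)-4$; the paper closes this with an additional reduction (delete $v$ together with all its $2$-neighbours, then colour back) giving $2d(v)\ge d_2(v)+23$, which caps the number of adjacent $2$-vertices sharply enough. Finally, the paper minimises not $e(G)$ but the number of crossings in the drawing, and actually uses that choice to eliminate a bad $6$-face/$4$-face pattern at a $2$-vertex; minimising $e(G)$ does not give you this.
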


It seems non-trivial to prove a more general result for $k$-planar graphs
for all $k\ge1$.  
Using ideas from our proof of Theorem~\ref{t:mainthm}, we can prove that a
$1$-planar graph $G$ admits an odd $14$-coloring if every proper minor of $G$
is 1-planar.  However, in general the class
of $1$-planar graphs is not closed under edge-contraction~\cite{notclosed}.  
Currently, the best known lower bound comes from
the graph \Emph{$K_7^*$}, which is formed from the complete graph $K_7$ by
subdividing each edge exactly once.  As observed in
\cite{oddsparse,PetSkr22}, we have $\chi_o(K_7^*)=7$. 
As shown in Figure~\ref{f:k7star}, since $K_7$ is 2-planar, we can get
a 1-planar embedding of $K_7^*$ by placing each subdividing vertex between
the two crossings (when they exist) of its edge.
It is known~\cite{1planar} that every 1-planar graph is $7$-degenerate. 
It would be interesting to 
use this to prove that every
1-planar graph admits an odd $s$-coloring for some $s$ much closer to 7. 

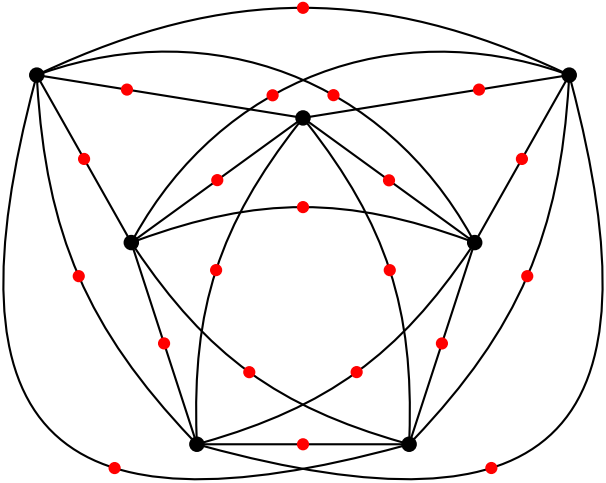
\begin{figure}[htb]
\centering
\begin{tikzpicture}[rotate=18, thick, scale=1.2]

\tikzset{1-thread/.style={postaction=decorate, decoration={markings,
    mark=at position 0.5 with {\node[usStyle] () {};} }}}
\tikzset{1-thread67/.style={postaction=decorate, decoration={markings,
    mark=at position 0.67 with {\node[usStyle] () {};} }}}
\tikzset{1-thread40/.style={postaction=decorate, decoration={markings,
    mark=at position 0.40 with {\node[usStyle] () {};} }}}
\tikzstyle{usStyle}=[shape = circle, minimum size = 4.0pt, inner sep = 0pt,
outer sep = 0pt, draw=red, fill=red]

\tikzstyle{uStyle}=[shape = circle, minimum size = 5.25pt, inner sep = 0pt,
outer sep = 0pt, draw, fill=black, semithick]
\tikzstyle{sStyle}=[shape = rectangle, minimum size = 4.5pt, inner sep = 0pt,
outer sep = 0pt, draw, fill=white, semithick]
\tikzstyle{lStyle}=[shape = circle, minimum size = 4.5pt, inner sep = 0pt,
outer sep = 0pt, draw=none, fill=none]
\tikzset{every node/.style=uStyle}

\foreach \i in {1,...,5}
\draw (\i*72:2cm) node[uStyle] (v\i) {};

\foreach \i/\j in {1/2, 2/3, 3/4, 4/5, 5/1}
\draw[1-thread] (v\i) -- (v\j);

\foreach \i/\j in {1/3, 2/4, 3/5, 4/1, 5/2}
\draw (v\i) edge[bend right=20, 1-thread] (v\j);

\draw (122:3.85cm) node[uStyle] (v6) {};
\draw (22:3.85cm) node[uStyle] (v7) {};

\draw[1-thread67] (v1) -- (v7);
\draw[1-thread] (v5) -- (v7);
\draw[1-thread67] (v1) -- (v6); 
\draw[1-thread] (v2) -- (v6); 
\draw (v6) edge[bend left=25, 1-thread] (v7); 
\draw (v3) edge[bend left=20, 1-thread] (v6); 
\draw (v4) edge[bend right=20, 1-thread] (v7); 
\draw (v2) edge[bend left=40, 1-thread40] (v7); 
\draw (v5) edge[bend right=40, 1-thread40] (v6); 
\draw (v4) edge[bend left=60, 1-thread40, looseness=1.8] (v6); 
\draw (v3) edge[bend right=60, 1-thread40, looseness=1.8] (v7); 
\end{tikzpicture}

\caption{A 1-plane embedding of $K_7^*$, where subdividing vertices are in red.}
\label{f:k7star}
\end{figure}

 We need to introduce more notation.  For a proper odd coloring $\tau$ of some
subgraph $J$ of $G$ and for each $v\in V(J)$, let \Emph{$\tau_o(v)$} 
denote the unique color that appears an odd number of times on $N_J(v)$ if such
a color exists; otherwise, $\tau_o(v)$ is undefined.  Define 
$\tau(N_J(v)):=\{\tau(u)\mid u\in N_J(v)\}$\aside{$\tau(N_J(v))$} and
$\tau_o(N_J(v)):=\{\tau_o(u)\mid u\in N_J(v)\}$.\aside{$\tau_o(N_J(v))$}  Note
that, in any process
that involves extending an odd coloring of $J$ to $G$  one vertex at a time,
the existence and identity of $\tau_o(v)$ may change each time a new vertex is
colored.   A \emph{$k$-vertex}\aside{$k$/$k^+$-vertex}  is a vertex of degree $k$.
A \emph{$k^+$-vertex}  is a vertex of degree at least $k$.    Let $G$ be  a
plane graph and let $f$ be a face of $G$.  We use $d(f)$ to denote the size of
$f$; in addition,   $f$ is a \emph{$k$-face}\aaside{$k$/$k^+$-face}{-6mm} of $G$ if
$d(f)=k$ and a \emph{$k^+$-face}  if $d(f)\ge k$.  For any positive integer
$k$, we let $[k]:=\{1,2, \ldots, k\}$. \aaside{$d(f)$, $[k]$}{-6mm}
A graph is \Emph{$d$-degenerate} if each subgraph $J$ has a vertex $v$ with
$d_J(v)\le d$.

We end this section with some easy results about graphs  with forbidden  minors. 
Let $\mathcal{F}$ denote a family of graphs that are closed under taking minors;
that is, if $G\in \mathcal{F}$, then every minor of $G$ also belongs to
$\mathcal{F}$.  We say $\mathcal{F}$ is \emph{$d$-degenerate} for some integer
$d\ge0$ if every graph in $\mathcal{F}$ is $d$-degenerate. As observed in
\cite{oddsparse,PetSkr22}, for every $n\in\mathbb{N}$, there exists a
$2$-degenerate graph $G$ with $\chi_o(G)=n$.  However, for  minor-closed
families, we prove the following.

\begin{thm} \label{t:minorclosed}
If $\mathcal{F}$ is minor-closed and $d$-degenerate, then $\chi_o(G)\le 2d+1$
for every $G\in\mathcal{F}$.
\end{thm}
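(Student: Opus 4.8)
The plan is to use strong induction on $|V(G)|$, reducing $G$ to a smaller member of $\mathcal{F}$ by contracting a single edge at a low‑degree vertex. If $G$ has an isolated vertex $v$, I would apply the inductive hypothesis to $G-v\in\mathcal{F}$ and then give $v$ an arbitrary color: $v$ carries no odd‑coloring constraint and lies in no other vertex's neighborhood. So assume $\delta(G)\ge 1$. Since $\mathcal{F}$ is $d$‑degenerate, pick a vertex $v$ with $1\le d_G(v)=k\le d$ and write $N_G(v)=\{u_1,\dots,u_k\}$. Let $G'=G/vu_1$ be obtained by contracting $vu_1$ (suppressing any parallel edges), with new vertex $w$. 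Then $G'\in\mathcal{F}$ has fewer vertices, so by induction it has an odd $(2d+1)$‑coloring $\sigma$, which I would lift to a coloring $\tau$ of $G$ by keeping $\tau=\sigma$ off $\{v,u_1\}$, setting $\tau(u_1)=\sigma(w)$, and choosing $\tau(v)$ as described below.

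The reason for contracting rather than deleting $v$ is that $w$ is adjacent in $G'$ to every $u_i$ with $i\ge 2$ (as $u_i\in N_G(v)\setminus\{v,u_1\}\subseteq N_{G'}(w)$), so $\sigma(w)\ne\sigma(u_i)$ for $i\ge 2$; hence on $N_G(v)$ the color $\tau(u_1)=\sigma(w)$ appears exactly once and $v$'s odd condition holds automatically, whatever $\tau(v)$ is. For a vertex $x$ with $x\not\sim v$, the multiset $\tau(N_G(x))$ either equals $\sigma(N_{G'}(x))$ or is obtained from it by replacing $w$ by $u_1$ (which has the same color), so $x$ retains an odd‑occurring color; and $\tau$ is proper on every edge not incident to $v$. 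Thus only the propriety at $v$ and the odd conditions of $u_1,\dots,u_k$ remain, and I would handle all of these by restricting $\tau(v)$ to avoid a bounded set of colors.

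Propriety at $v$ forbids the $\le k$ colors $\tau(u_1),\dots,\tau(u_k)$. For the odd condition of each $u_i$, write $N_G(u_i)=\{v\}\cup A_i$, where $A_i$ is already colored independently of $\tau(v)$; the colors occurring an odd number of times on $N_G(u_i)$ are obtained from those occurring an odd number of times on $A_i$ by toggling the membership of $\tau(v)$. Hence: if the colors on $A_i$ already pair up, then $\tau(v)$ itself occurs an odd number of times on $N_G(u_i)$ and no constraint arises; if $A_i$ carries two or more odd‑occurring colors, every $\tau(v)$ works; and if $A_i$ carries a unique odd‑occurring color $q_i$, it suffices that $\tau(v)\ne q_i$. (For $u_j$ with $j\ge 2$ and $u_j\sim u_1$, the fact that $\sigma$ is an odd coloring of $G'$ shows $A_j$ always carries an odd‑occurring color, namely $\sigma_o(u_j)$ computed in $G'$; the remaining cases amount to reading off $q_i$ from $\sigma$.) Altogether $\tau(v)$ must avoid at most $k+k=2k\le 2d<2d+1$ colors, so a valid choice exists, and $\tau$ is the desired odd $(2d+1)$‑coloring of $G$. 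I expect the only point requiring genuine care to be this last paragraph — the parity bookkeeping of odd‑occurring colors on $N_G(u_1),\dots,N_G(u_k)$ — whose crux is the observation that an ``all‑even'' fixed neighborhood $A_i$ is harmless, since then the single remaining neighbor $v$ supplies the odd color.
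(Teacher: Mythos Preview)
Your proof is correct and follows essentially the same approach as the paper: contract an edge at a vertex $v$ of degree at most $d$, lift the inductive odd $(2d+1)$-coloring by assigning the contracted color to $u_1$, and then color $v$ avoiding at most $2d$ colors (the $\le d$ colors on $N(v)$ together with, for each neighbor $u_i$, the unique odd-occurring color on $N_G(u_i)\setminus\{v\}$ when it exists). The paper's write-up is terser---it packages your case analysis into the single phrase ``color $x$ from $[2d+1]\setminus(\tau(N(x))\cup\tau_o(N(x)))$''---but the argument is identical.
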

\begin{proof} Suppose the statement is false. Let $G\in \mathcal{F}$ be a graph
with $\chi_o(G)\ge 2d+2$. We choose $G$ with $|G|$ minimum. Note that $G$ is
connected and $\delta(G)\le d$. Fix $xy\in E(G)$ with $d(x)=\delta(G)$.  Let
$G':= G/xy$ and let $w$ be the new vertex in $G'$; now $G'\in\mathcal{F}$.  
Since $G$ is minimal, $G'$ has an odd coloring $\tau:V(G')\rightarrow [2d+1]$.
We extend $\tau$ to $G$ by coloring $y$ with the  color $\tau(w)$ and coloring $x$ 
with a color in $[2d+1]\less (\tau(N(x))\cup \tau_o(N(x)))$; when coloring $x$,
note that we forbid at most $2d$ colors. It is simple to check that $\tau$ is an odd
$(2d+1)$-coloring of $G$, because $\tau(y)$ appears exactly once on $N(x)$. 
This is a contradiction.
\end{proof}

Corollary~\ref{t:K4} below follows directly from Theorem~\ref{t:minorclosed} and
the fact that every graph  with no $K_4$ minor is 2-degenerate; it extends the
result of Caro, Petru\v{s}evski, and \v{S}krekovski~\cite{CPS2022} on
outerplanar graphs. The sharpness of Corollary~\ref{t:K4} is witnessed by $C_5$.  

\begin{cor}\label{t:K4} 
Every graph with no $K_4$ minor admits an odd 5-coloring.
\end{cor}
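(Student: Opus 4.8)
The plan is to deduce this immediately from Theorem~\ref{t:minorclosed} by taking $\mathcal{F}$ to be the class of all graphs with no $K_4$ minor and verifying the two hypotheses of that theorem, namely that $\mathcal{F}$ is minor-closed and that it is $2$-degenerate.

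First I would check that $\mathcal{F}$ is minor-closed. This is immediate from the transitivity of the minor relation: if $G$ has no $K_4$ minor and $H$ is a minor of $G$, then any $K_4$ minor of $H$ would be a $K_4$ minor of $G$, a contradiction; hence $H\in\mathcal{F}$. Next I would invoke the classical fact that every graph with no $K_4$ minor is $2$-degenerate. The quickest self-contained argument: such graphs have at most $2|G|-3$ edges (equivalently, every graph with $|G|\ge 2$ vertices and at least $2|G|-2$ edges contains a $K_4$ minor), which by averaging forces a vertex of degree at most $3$; a short case analysis (or the standard structure theorem that $K_4$-minor-free graphs are exactly the subgraphs of series-parallel graphs) upgrades this to the existence of a vertex of degree at most $2$ in every subgraph, since every subgraph of a $K_4$-minor-free graph is again $K_4$-minor-free. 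Thus $\mathcal{F}$ is $2$-degenerate.

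With both hypotheses verified, Theorem~\ref{t:minorclosed} applied with $d=2$ yields $\chi_o(G)\le 2d+1=5$ for every $G\in\mathcal{F}$, which is exactly the claim. Finally, for sharpness I would note that $C_5$ has no $K_4$ minor (it is outerplanar, hence series-parallel) while $\chi_o(C_5)=5$, as recorded in the introduction, so the bound cannot be improved.

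The only step requiring any real content is the $2$-degeneracy of $K_4$-minor-free graphs, and even that is entirely standard; the rest is bookkeeping. So I expect no genuine obstacle here — the corollary is a direct specialization of Theorem~\ref{t:minorclosed}, and the write-up can be kept to a couple of sentences citing $2$-degeneracy of $K_4$-minor-free graphs.
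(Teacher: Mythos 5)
Your proposal is correct and matches the paper's argument exactly: the corollary is deduced by applying Theorem~\ref{t:minorclosed} with $d=2$, using the standard facts that the class of $K_4$-minor-free graphs is minor-closed and $2$-degenerate, with sharpness witnessed by $C_5$.
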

 
For each integer $p$ with $5\le p\le 9$, it is known \cite{K8, Mader68, K9} 
that every graph with no $K_p$ minor is $(2p-5)$-degenerate; so $G$ has an odd
$(4p-9)$-coloring by Theorem~\ref{t:minorclosed}.

\section{Proof of Theorem~\ref{t:mainthm}}

In this section we prove our main result, Theorem~\ref{t:mainthm}: If $G$
is a 1-planar graph, then $\chi_o(G)\le {23}$.\medskip

Suppose the statement is false, and let $G$ be a counterexample minimizing $|G|$. 
Note that $G$ is connected and $\chio(G)\ge {24}$.  We consider a plane embedding of
$G$ with as few edge crossings as possible.  Throughout the proof,
we use $N(v)$ and $d(v)$\aside{$N(v)$, $d(v)$} to denote the neighborhood and degree of a vertex $v$
in $G$, respectively. We next prove a sequence of 8 claims. 

\begin{clm}\label{2ec}
$G$ is 2-edge-connected and so $\delta(G)\ge2$.
\end{clm}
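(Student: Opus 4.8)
The plan is to prove Claim~\ref{2ec} by the standard minimal-counterexample technique: assume $G$ has a cut vertex or a bridge, find a smaller 1-planar graph (or a pair of smaller 1-planar graphs) whose odd $23$-colorings can be combined into an odd $23$-coloring of $G$, contradicting minimality. First I would record the easy observation that $G$ has no isolated vertices (it is connected and has more than one vertex, since $\chio(G) \ge 24 > 1$) and that $\delta(G) \ge 1$. The real content is ruling out a cut vertex and, equivalently here, a $1$-vertex.

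The plan for the $1$-vertex case: suppose $v$ is a vertex with $d(v) = 1$, and let $u$ be its neighbor. Consider $G' := G - v$, which is still $1$-planar (a subgraph of a $1$-planar graph, using the same drawing). By minimality $G'$ has an odd $23$-coloring $\tau$. I would extend $\tau$ to $v$ by choosing $\tau(v) \in [23] \setminus (\{\tau(u)\} \cup \tau(N(u)) \cup \tau_o(N(u)))$; here $\tau(N(u))$ has size at most $d(u)-1 \le 22$ since $u$'s only other neighbors are in $G'$ — wait, that bound is too weak, so instead I would argue more carefully. The key point is that for every vertex $w \ne u$, its neighborhood is unchanged, so the oddness condition at $w$ still holds with the same witness color; only $u$ needs attention. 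Since $v$ is a leaf, $\tau(v)$ appears exactly once on $N(u)$, so $\tau(v)$ itself is an odd-appearing color in $N(u)$ — hence $u$'s oddness is automatically satisfied regardless of which color $v$ gets. Therefore the only constraint is that $\tau(v) \ne \tau(u)$ (properness) and, if we want, $\tau(v)$ distinct from colors that would spoil oddness at other neighbors of $v$, but $v$ has no other neighbors. So in fact we only forbid one color for $v$, and $23 \ge 2$ suffices; this contradicts $G$ being a counterexample. Hence $\delta(G) \ge 2$.

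The plan for the cut-vertex case: suppose $v$ is a cut vertex, so $G = G_1 \cup G_2$ with $V(G_1) \cap V(G_2) = \{v\}$ and both $G_i$ proper subgraphs. Each $G_i$ is $1$-planar, so each has an odd $23$-coloring $\tau_i$. After permuting colors we may assume $\tau_1(v) = \tau_2(v)$. Gluing gives a proper coloring $\tau$ of $G$. For oddness: every vertex $w \ne v$ has all its neighbors inside a single $G_i$, so its oddness witness from $\tau_i$ still works. The subtle point is $v$ itself: its neighborhood in $G$ is the disjoint union $N_{G_1}(v) \sqcup N_{G_2}(v)$, and a color appearing an odd number of times in $N_{G_1}(v)$ might also appear an odd number of times in $N_{G_2}(v)$, giving an even total. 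To fix this I would use a counting-parity argument or re-permute colors within one side: since we have $23$ colors and each side forbids only a bounded number of ``conflicts,'' we can recolor $G_2$ (fixing $\tau_2(v)$) so that the multiset of colors on $N_{G_2}(v)$ avoids the finitely many bad parity interactions — or, more cleanly, observe that within $G_i$ we may freely relabel any color class not used on $N_{G_i}(v) \cup \{v\}$, and a short pigeonhole shows compatible relabelings exist. The main obstacle I anticipate is precisely this oddness-at-the-cut-vertex bookkeeping; the degree-$1$ case is routine, but making the cut-vertex gluing preserve oddness at $v$ requires care. In the write-up I would likely handle it by the cleanest route available: note that $G$ $2$-edge-connected already follows once we know $\delta(G) \ge 2$ together with no cut vertex, and for ``no cut vertex'' reduce to a block that is an end-block, so one side $G_1$ is a block containing $v$ and the analysis of $N_{G_1}(v)$ is more constrained, often enough to choose a free color on the smaller side.
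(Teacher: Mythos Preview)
You are working harder than the claim requires. The statement is that $G$ is 2-\emph{edge}-connected (no bridges), not 2-connected (no cut vertices); you reduce ``no bridge'' to ``$\delta(G)\ge 2$ and no cut vertex,'' which is valid but a detour. The paper never touches cut vertices: it assumes a bridge $xy$, lets $G_1$ and $G_2$ be the components of $G\setminus xy$ containing $x$ and $y$ respectively, and odd-$23$-colors each by minimality. Since $x$ and $y$ lie in \emph{different} components, there is no gluing-at-a-single-vertex parity problem; one simply permutes colors so that $\tau_1(x)=1$, an odd-witness color for $x$ in $G_1$ is $2$ (when $x$ has a neighbor in $G_1$), $\tau_2(y)=3$, and an odd-witness for $y$ in $G_2$ is $4$. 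After restoring $xy$, properness holds since $1\ne 3$, and oddness at $x$ holds because color $2\ne 3=\tau(y)$ still appears an odd number of times on $N_G(x)$ (or, if $x$ was isolated in $G_1$, then $\tau(y)$ itself is the witness); symmetrically for $y$. Every other vertex has unchanged neighborhood. This disposes of the leaf case and the general bridge case in one stroke.

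Your leaf argument also contains a slip: it is not true that ``$\tau(v)$ appears exactly once on $N(u)$'' merely because $v$ is a leaf---other neighbors of $u$ may share whatever color you give $v$, so $u$'s oddness is \emph{not} automatic. The correct constraint is $\tau(v)\notin\{\tau(u),\tau_o(u)\}$, just two forbidden colors. Your cut-vertex argument, besides being unnecessary here, is left as a sketch (``a short pigeonhole shows compatible relabelings exist''); it can in fact be completed, but the bridge approach sidesteps the difficulty entirely.
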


\begin{proof} Suppose instead that   $xy\in E(G)$ is a bridge in $G$.
Let $G_1, G_2$ be the components of $G \setminus xy$ such that   $x\in V(G_1)$ and $y\in V(G_2)$.
By the choice of $G$, let $\tau_1:V(G_1)\rightarrow [{23}]$ be an odd  coloring
of $G_1$  such that $\tau_1(x)=1$ and color $2$ appears an odd number of times
on $N(x)\less y$ if  $x$ is a $2^+$-vertex; let  $\tau_2:V(G_2)\rightarrow
[{23}]$  be an odd  coloring of $G_2$  such that $\tau_2(y)=3$ and color $4$
appears an odd number of times on $N(y)\less x$ if $y$ is a $2^+$-vertex.  It
is simple to see that we can combine $\tau_1$ and $\tau_2$ into an odd
{23}-coloring of $G$, a contradiction. Thus $G$ is 2-edge-connected and so $\delta(G)\ge2$.
\end{proof}

\begin{clm}\label{oddvertex}
Every odd vertex in $G$ has degree at least {13}.
\end{clm}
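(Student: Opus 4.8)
The strategy is a standard minimal-counterexample/discharging-style reducibility argument: assume some odd vertex $v$ has degree $d(v)=:d$ with $d\le{11}$ (so $d\in\{3,5,7,9,11\}$, using $\delta(G)\ge2$ from Claim~\ref{2ec}), delete $v$, invoke minimality of $G$ to properly odd-color $G-v$ with ${23}$ colors, and then show that $v$ can be colored to extend the coloring — contradicting that $G$ is a counterexample. So the body of the proof is a counting argument bounding the number of ``forbidden'' colors for $v$, together with a small fix-up to repair the oddness condition at $v$'s neighbors.

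First I would set up the notation: let $\tau$ be an odd ${23}$-coloring of $G-v$. When we color $v$, we must (i) avoid $\tau(N(v))$ to keep the coloring proper — that forbids at most $d$ colors; (ii) not destroy the oddness witness at each neighbor $u\in N(v)$. For (ii), the delicate point is that adding $v$ back changes the parity of $\tau(v)$ on each $N(u)$, so a neighbor $u$ whose unique odd-color $\tau_o(u)$ equals $\tau(v)$ may lose its witness — unless $\tau_o(u)$ then appears an \emph{even} but still-usable number of times, or $u$ has another color available. The clean way to handle (ii) is: for each $u\in N(v)$, there is at most one ``bad'' color for $v$ (roughly $\tau_o(u)$, the color we must not flip the parity of at $u$), so (ii) forbids at most another $d$ colors. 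Thus at most $2d\le{22}$ colors are forbidden, leaving a color for $v$. This gives the bound ``odd vertices have degree $\ge{12}$,'' and since the degree is odd we upgrade to $\ge{13}$ — which is exactly why the bound in the claim is $13$ and not $12$, and why restricting to \emph{odd} vertices matters.

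The subtlety that needs real care — and I expect this to be the main obstacle — is that the naive ``$2d$ forbidden colors'' bound is not quite honest: flipping the parity at a neighbor $u$ from odd-count to even-count does not automatically break the coloring, because $u$ might still have \emph{some other} color appearing an odd number of times on $N(u)$ after the flip, or $u$ might be an even vertex where $\tau(v)$'s new count is odd. To make this rigorous I would argue as follows. Color $v$ greedily with any color $c\notin\tau(N(v))$ that also differs from every $\tau_o(u)$ for $u\in N(v)$; if such $c$ exists (which needs $d+d<{24}$, i.e.\ $d\le{11}$), then no neighbor's witness is disturbed, and $v$ itself gets an odd witness because $v$ has odd degree, so $N(v)$ has odd size and hence \emph{some} color appears an odd number of times on $N(v)$ (a counting-mod-2 observation: if every color appeared an even number of times, $d(v)=|N(v)|$ would be even). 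That last point is precisely where oddness of $d(v)$ is used a second time, and it is what lets us conclude $v$'s own constraint is free. The remaining work is just to double-check the arithmetic $2\cdot{11}={22}<{24}$ and to state the conclusion: if $d(v)$ is odd then $d(v)\ge{13}$.

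I would organize it as: (1) suppose $v$ is an odd vertex with $d(v)\le{11}$; (2) take an odd ${23}$-coloring $\tau$ of $G-v$ by minimality; (3) observe the set of colors forbidden for $v$ has size at most $d(v)+d(v)\le{22}<{23}$, where the first $d(v)$ comes from properness and the second from preserving each neighbor's odd witness; (4) pick an allowed color for $v$, note that $\tau_o(v)$ is automatically defined since $|N(v)|=d(v)$ is odd; (5) conclude $\tau$ extends to an odd ${23}$-coloring of $G$, contradicting the choice of $G$, so every odd vertex has degree at least ${13}$.
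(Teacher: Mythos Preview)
Your proposal is correct and matches the paper's proof exactly: delete $v$, take an odd $23$-coloring $\tau$ of $G\setminus v$ by minimality, and color $v$ from $[23]\setminus(\tau(N(v))\cup\tau_o(N(v)))$, which forbids at most $2d(v)\le 22$ colors, with $v$'s own oddness constraint automatically satisfied since $d(v)$ is odd. One cosmetic slip: your inequalities should compare against $23$ rather than $24$ (you need $2d(v)\le 22<23$), but this does not affect the conclusion.
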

 
\begin{proof}
Suppose not, and let $v \in V(G)$ be an odd vertex with $d(v)\le
{11}$.
Now $G \setminus v$ has an odd {23}-coloring $\tau$ by  the minimality of $G$.
But then $\tau$ can be extended to an odd {23}-coloring of $G$ by assigning to
$v$ a color in $[{23}]\setminus(\tau(N(v))\cup \tau_o(N(v)))$, since 
at most {22} colors are forbidden. (Note that some color must 
appear an odd number of times on  $N(v)$, since $d(v)$ is odd.)
\end{proof}

For the rest of the proof,  a vertex $v$ in $G$ is \emph{big} if $d(v)\ge
{12}$; 
otherwise $v$ is \emph{small}.\aside{big, small}  

\begin{clm}\label{c:bign}
No two small vertices are adjacent in $G$.
\end{clm}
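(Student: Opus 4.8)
The plan is to argue by contradiction, using the minimality of $G$ together with the degree information already established in Claims~\ref{2ec}--\ref{oddvertex}. Suppose $u$ and $v$ are adjacent small vertices, so $d(u),d(v)\le {11}$, and by Claim~\ref{oddvertex} both $u$ and $v$ have even degree, hence $d(u),d(v)\le{10}$ (and each is at least $2$ by Claim~\ref{2ec}). First I would pass to $G':=G\less\{u,v\}$, which is again $1$-planar and smaller than $G$, so by minimality it has an odd ${23}$-coloring $\tau$. The goal is to extend $\tau$ to $u$ and $v$.

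The key accounting is as follows. When we color $u$, we must avoid $\tau(N(u)\less v)$ and $\tau_o(N(u)\less v)$; since $d(u)\le{10}$, the vertex $u$ has at most $9$ neighbors other than $v$, so this forbids at most $18$ colors, leaving at least $5$ choices in $[{23}]$. Symmetrically, coloring $v$ forbids at most $18$ colors from its $\le 9$ neighbors other than $u$, again leaving at least $5$ choices. So there is plenty of slack to pick $\tau(u)$ and $\tau(v)$ distinct from each other and from all already-colored neighbors, guaranteeing a proper coloring. The more delicate point is the \emph{odd} condition: we must ensure that every vertex still has a color appearing an odd number of times in its (now complete) neighborhood. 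For $u$ and $v$ themselves this is automatic once we also arrange, say, that $\tau(v)\notin\tau(N(u)\less\{v\})$ and $\tau(u)\notin\tau(N(v)\less\{u\})$ — then $\tau(v)$ appears exactly once on $N(u)$ and $\tau(u)$ exactly once on $N(v)$; this costs at most one extra forbidden color on each side, so we still have $\ge 4$ legal choices for each and can complete the selection. For a neighbor $w$ of $u$ (or of $v$, or of both), coloring $u$ and $v$ changes the parity of at most the colors $\tau(u)$ and $\tau(v)$ on $N(w)$, so to protect $w$ we only need to avoid, for each such $w$, creating a situation where $w$ loses its odd color. The standard trick — used already in the proof of Theorem~\ref{t:minorclosed} and in Claim~\ref{2ec} — is that we are free to further restrict the colors of $u$ and $v$ to avoid the (few) "bad" colors at each neighbor; since each of $u,v$ has at most $9$ outside neighbors and we start with $\ge 5$ free colors, a short counting argument shows a valid choice remains. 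I would organize this as: (i) color $u$ greedily avoiding $\tau(N(u)\less v)$, $\tau_o(N(u)\less v)$, and the at-most-one bad color needed so that no neighbor of $u$ loses its parity witness; (ii) color $v$ similarly, additionally avoiding $\tau(u)$ and making $\tau(u)$ (resp.\ $\tau(v)$) appear exactly once on $N(v)$ (resp.\ $N(u)$); then verify $u$, $v$, and all their neighbors satisfy the odd condition.

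I expect the main obstacle to be the bookkeeping around common neighbors of $u$ and $v$ and around neighbors $w$ whose \emph{unique} odd color happens to be $\tau(u)$ or $\tau(v)$: flipping the parity of that one color at $w$ could destroy $w$'s witness, and if $w$ has no other odd color we are stuck unless we steer $\tau(u),\tau(v)$ away from it. The resolution is quantitative: the number of neighbors of $u$ plus neighbors of $v$ is at most $9+9=18$, so the total number of colors we must additionally dodge to protect all of them is at most a constant well below the $\ge 5$-color slack we have on each of $u$ and $v$ (more carefully, the constraints split between the two vertices). Filling in these inequalities precisely is routine, but it is where the bound "$12$" in the definition of \emph{big} is really being used, so I would present that count explicitly. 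If the direct count is tight in some corner case, a safe fallback is to first choose $\tau(u)$ to avoid both $\tau(N(u)\less v)$ and the set of "forced" odd colors of $u$'s neighbors, then choose $\tau(v)$ last with full knowledge of $\tau(u)$, which maximizes flexibility.
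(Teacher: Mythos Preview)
Your overall strategy---delete both small vertices, take an odd ${23}$-coloring of the remainder by minimality, then extend greedily---is exactly the paper's approach. The gap is in your mechanism for securing the odd condition at $u$ and $v$ themselves. You propose to force $\tau(v)\notin\tau(N(u)\less\{v\})$ so that $\tau(v)$ is unique on $N(u)$, and you assert this ``costs at most one extra forbidden color on each side.'' That is not correct: the set $\tau(N(u)\less\{v\})$ can have up to $9$ elements, so this constraint forbids up to $9$ colors for $v$, not $1$. Stacking it on top of the $18$ you already forbid (and symmetrically for $u$) pushes the count past $23$, and your sketch gives no way around this.

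The paper's fix is cheaper. To ensure $u$ has an odd color after both $u$ and $v$ are colored, it suffices that $\tau(v)\ne\tau_o(u)$, where $\tau_o(u)$ denotes a fixed color appearing an odd number of times on $N_{G'}(u)$ (if one exists). Either that color survives as an odd witness, or no such color existed and then $\tau(v)$ itself is the witness. This is a \emph{single} forbidden color. The paper therefore colors $u$ first, avoiding $\tau(N_{G'}(u))\cup\tau_o(N_{G'}(u))\cup\{\tau_o(v)\}$ (at most $9+9+1<23$), and then colors $v$ avoiding $\tau(N_{G'}(v))\cup\tau_o(N_{G'}(v))\cup\{\tau(u),\tau_o(u)\}$ (at most $9+9+2<23$). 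Your later paragraph about ``additionally dodging'' colors to protect neighbors is also redundant: that is precisely what the $\tau_o(N_{G'}(\cdot))$ term already does, so there is nothing further to count.
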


\begin{proof} Suppose not, and let $v$ and $w$ be two adjacent small vertices in $G$.
By Claim~\ref{oddvertex},  $d(v)\le {10}$ and $d(w)\le
{10}$.  Let $G':=G \setminus
\{ v, w \}$ and let $\tau$ be an odd {23}-coloring of $G'$.
Now $|\tau(N_G'(x))\cup \tau_o(N_G'(x))|\le {20}$ for each $x\in \{v,w\}$. 
But $\tau$ can be extended to an odd {23}-coloring of $G$ as follows: 
 first assign $v$ a color in $[{23}]\setminus(\{ \tau_o(w) \} \cup
\tau(N_G'(v))\cup \tau_o(N_G'(v)))$; then  assign $w$ a color in
$[{23}]\setminus(\{ \tau(v), \tau_o(v) \} \cup \tau(N_{G'}(w))\cup \tau_o(N_{G'}(w)))$.
\end{proof}

\begin{clm}\label{c:crossing}
Every edge incident to a small vertex in $G$  has a crossing.
\end{clm}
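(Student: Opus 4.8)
The plan is to argue by contradiction: suppose some edge $vw$ incident to a small vertex $v$ has no crossing. By Claim~\ref{c:bign}, $w$ is big, so $d(w)\ge 12$; by Claims~\ref{oddvertex} and~\ref{c:bign}, $d(v)$ is even and $d(v)\le 10$, with all neighbors of $v$ big. The idea is to delete $v$, apply minimality to get an odd $23$-coloring $\tau$ of $G\less v$, and then recolor. The trouble with coloring $v$ directly is the usual one: we must avoid $\tau(N(v))\cup\tau_o(N(v))$, which could be up to $2d(v)\le 20$ colors, leaving room, but we also need every \emph{neighbor} of $v$ to still see some color an odd number of times after $v$ is added back. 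For a big neighbor $u$ of $v$ with $d_G(u)$ large, adding $v$ back perturbs the parity of exactly one color class on $N(u)$, so $u$ may lose its odd color; this is the real danger, and it is why we must use the crossing-freeness of $vw$.

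First I would record the structural consequence of $vw$ being crossing-free: contracting the edge $vw$ (or, more carefully, suppressing $v$ in a controlled way) keeps the drawing $1$-planar, since an uncrossed edge can be contracted without introducing new crossings. So let $G' := G/vw$ with new vertex $z$; then $G'$ is $1$-planar with $|G'|<|G|$, hence has an odd $23$-coloring $\sigma$. Now I would transfer $\sigma$ back to $G$ by setting $\tau(u):=\sigma(u)$ for $u\neq v,w$, and $\tau(w):=\tau(v):=\sigma(z)$. Every vertex $u\notin N[v]\cup N[w]$ keeps its odd color automatically. For a vertex $u$ adjacent to $w$ but not to $v$, its neighborhood multiset is unchanged, so it keeps its odd color; similarly for $u$ adjacent to $v$ but not $w$, since $N_G(u)$ ``sees'' $v$ with color $\sigma(z)$ exactly as $N_{G'}(u)$ saw $z$. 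The vertices adjacent to both $v$ and $w$ need a touch more care, but there the color $\sigma(z)$ appears twice (once from $v$, once from $w$) where it appeared once in $G'$, so the parity of every \emph{other} color is preserved; I would check that the odd color of such a vertex in $G'$ was not $\sigma(z)$ itself, or else adjust — this bookkeeping is routine once one fixes which color plays the role of the ``protected'' color, exactly as in the proof of Theorem~\ref{t:minorclosed}.

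That leaves $v$ and $w$ themselves. For $v$: in $G$, $N_G(v) = (N_{G'}(z)\less\{$images$\})\cup\{w\}$ up to the contraction bookkeeping, and $w$ now carries color $\sigma(z)=\tau(v)$'s... — here I would instead follow the cleaner route used for bridges and for adjacent small vertices: do \emph{not} contract, but delete $v$, take an odd coloring $\tau$ of $G\less v$ chosen (by minimality applied to a slightly larger auxiliary graph, or just directly) so that some fixed color, say $1$, appears an odd number of times on $N(w)\less v$, then color $v$ from $[23]\less(\{\tau_o(u):u\in N(v)\}\cup\tau(N(v))\cup\{$one color to fix $w$'s parity$\})$. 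The crossing-free hypothesis on $vw$ is what guarantees $v$ and $w$ together lie in a nice planar region so that this degree/face counting goes through; concretely, I expect to use that the uncrossed edge $vw$ lets us treat $\{v,w\}$ almost as in the plane case.

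The main obstacle I anticipate is precisely the interaction at the common neighbors of $v$ and $w$ and at $w$ itself: ensuring that restoring $v$ (with the forced color equal to $w$'s color, forced because $vw$ is uncrossed and we want the contraction to be legitimate) does not destroy the odd color of $w$ or of an $x\in N(v)\cap N(w)$. I would handle this by the ``reserve a color'' trick — when building the coloring of the smaller graph, demand that a designated color appears an odd number of times on the relevant punctured neighborhoods — which is exactly the device already used in Claims~\ref{2ec} and~\ref{c:bign}, so the only genuinely new ingredient is the topological observation that an uncrossed edge incident to $v$ can be contracted within the class of $1$-planar graphs. Everything else is the same counting: $d(v)\le 10$ forbids at most $20+O(1)$ colors, comfortably under $23$.
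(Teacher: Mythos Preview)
Your core topological observation is right and matches the paper: since $vw$ has no crossing, contracting it (equivalently, deleting $v$ and routing each edge $vz$ to $wz$ through the spot where $v$ sat) keeps the drawing $1$-planar, so minimality applies. But your execution has a genuine gap.

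You propose to pull back the coloring $\sigma$ of $G'=G/vw$ by setting $\tau(w):=\tau(v):=\sigma(z)$. That is already an improper coloring, since $vw\in E(G)$. You seem to notice something is off and pivot to ``delete $v$ and reserve a color on $N(w)\setminus v$'', but that route does not use the uncrossed-edge hypothesis at all, and more importantly it cannot guarantee that $v$ itself gets an odd color: $d(v)$ is even (by Claim~\ref{oddvertex}), so after coloring $v$ to avoid $\tau(N(v))\cup\tau_o(N(v))$ every color could appear an even number of times on $N(v)$. Your ``reserve a color'' trick protects neighbors of $v$, not $v$.

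The fix---and this is exactly the point of the contraction---is to keep $\tau(w):=\sigma(z)$ but give $v$ a \emph{fresh} color from $[23]\setminus(\tau(N(v))\cup\tau_o(N(v)))$, with $\tau_o$ computed in the partial coloring of $G$. At most $2d(v)\le 22$ colors are forbidden, so a color remains. The contraction now does real work: every $u\in N_G(v)\setminus\{w\}$ is adjacent to $w$ in $G'$ (either it already was, or the edge $wu$ was added), so $\tau(u)\ne\tau(w)$ by properness in $G'$. Hence $\tau(w)$ appears \emph{exactly once} on $N_G(v)$, and $v$ automatically has an odd color. All the bookkeeping you worried about at common neighbors of $v$ and $w$ evaporates: for each such $u$, passing from $G'$ to $G$ just adds one neighbor (namely $v$) to $N(u)$, and you already avoided $\tau_o(u)$ when picking $\tau(v)$. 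This is the same device as in the proof of Theorem~\ref{t:minorclosed}; you cited that theorem but didn't carry over its key step, which is that the low-degree vertex gets a \emph{new} color while its partner keeps the contracted vertex's color.
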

 
\begin{proof}
Suppose instead that  there exists  $xy \in E(G)$ such that $x$ is a
small vertex and $xy$ has no crossing.  Form $G'$ from $G$
by deleting vertex $x$ and adding an edge $yz$ for each $z\in
N(x) \setminus N(y)$; since edge $xy$ has no crossing, we can guarantee
that each new edge $yz$  has  a crossing if and only if $xz$ has a crossing. 
Thus $G'$ is 1-planar.  By the minimality of $G$, let $\tau$ be an odd
{23}-coloring of $G'$.  We can extend $\tau$ to an odd
{23}-coloring  of
$G$ by coloring $x$ from $[{23}]\setminus(\tau(N(x))\cup 
\tau_o(N(x)))$; at least {1} color remains available, and $\tau(y)$ 
occurs exactly once on $N(x)$, a contradiction.  
\end{proof} 

For each vertex $v$, let \Emph{$d_2(v)$} denote the number of 2-vertices adjacent to
$v$ in $G$.  The following claim is a simplified version of
Lemma~2.1~in~\cite{CCKP}.

{
\begin{clm}\label{borrowed-lem}
If $v$ is a vertex with $d_2(v)\ge 1$, then $2d(v)\ge d_2(v)+23$.
\end{clm}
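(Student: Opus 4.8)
The plan is to bound $d_2(v)$ by exploiting minimality together with Claims~\ref{2ec}--\ref{c:crossing}. Suppose $d(v)=d$ and let $u_1,\dots,u_{d_2(v)}$ be the $2$-vertices adjacent to $v$. For each such $u_i$, let $w_i$ be its other neighbor; by Claim~\ref{c:bign} every $w_i$ is big, and in particular $w_i\neq u_j$ for all $j$. The idea is to delete $v$ together with all the $2$-vertices $u_i$, obtain an odd $23$-coloring $\tau$ of the resulting graph $G'$ by minimality, and then color greedily in the order: first the $u_i$'s, then $v$ — or the other order — counting forbidden colors carefully to force the claimed inequality. Since each $u_i$ has degree $2$ in $G$ with neighbors $v$ and $w_i$, once $v$ is colored the pair $\{\tau(w_i),\tau_o(w_i)\}$ is the only obstruction coming from $w_i$, so $u_i$ sees at most a bounded number of constraints; and each $u_i$ contributes to $\tau(N(v))$ and $\tau_o(N(v))$ when we come to color $v$.

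The cleaner route is probably to color $v$ last. First extend $\tau$ to the $u_i$'s. When coloring $u_i$, the forbidden colors are $\tau(w_i)$, $\tau_o(w_i)$ (recomputed as we go, but $w_i$ is big and its other neighbors are already colored in $G'$), and we must also keep $u_i$ from being forced to collide badly — but since $d(u_i)=2$, giving $u_i$ any color different from $\tau(w_i)$ already makes $\tau(v)$ and $\tau(w_i)$ the two colors on $N(u_i)$, and $u_i$ automatically has an odd color as long as those two are distinct (each appears once). So the only real constraint on $u_i$ is $\tau(u_i)\neq\tau(w_i)$, plus we may want to reserve flexibility for $v$. Then, when we finally color $v$: its neighbors are the $u_i$'s plus $d-d_2(v)$ other (big) vertices. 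We must avoid $\tau(N(v))$, which has size at most $d$, and avoid $\tau_o(N(v))$. The key gain is that we can choose the colors of the $u_i$'s to make $\{\tau(u_1),\dots,\tau(u_{d_2(v)})\}$ small, ideally a single color, so that $v$'s proper-coloring constraint from the $2$-vertices costs only $1$ instead of $d_2(v)$; and for the oddness constraint on $v$ itself, since we control the $u_i$'s we can arrange that some color appears an odd number of times on $N(v)$. Balancing the count $d + (\text{something}) \le 23$ against $d_2(v)$ should yield $2d(v)\ge d_2(v)+23$.

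The main obstacle I expect is the interaction between the oddness condition at $v$ (we need \emph{some} color odd on $N(v)$, so we cannot freely collapse all the $u_i$ colors together if that destroys parity) and the oddness conditions at the big neighbors $w_i$ (coloring $u_i$ changes $\tau_o(w_i)$, which might in turn re-forbid a color at $v$ if $w_i\in N(v)$, though generically $w_i\notin N(v)$). A safe way to handle the parity at $v$: split the $u_i$'s into two groups, color most of them with one common color $a$ and, if needed, color one of them with a second color $b$ to fix the parity of $a$ on $N(v)$ to be odd; this uses at most $2$ colors among the $u_i$'s and guarantees $\tau_o(v)$ exists before we even color $v$, so when we color $v$ we only need to avoid $\tau(N(v))$ (of size at most $d-d_2(v)+2$) and not worry that oddness fails. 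Then availability at $v$ requires $d-d_2(v)+2 < 23$ roughly, but we also spent colors reserving $a,b$ away from each $\tau(w_i)$ — pushing the constants through should give exactly $2d \ge d_2(v)+23$. I would double-check the edge cases $d_2(v)=1,2$ and the possibility that two of the $w_i$ coincide (allowed, since big–big adjacency is fine) since those affect how many distinct colors the $u_i$'s are forced to avoid.
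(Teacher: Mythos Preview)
Your setup is the same as the paper's: form $G'=G\setminus(\{v\}\cup\{u_1,\dots,u_{d_2(v)}\})$, take an odd $23$-coloring $\tau$ of $G'$ by minimality, and extend. The divergence is in the order of extension, and your choice turns out to cost you exactly one color too many.

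The paper colors $v$ \emph{first}. At that moment the $u_i$ are uncolored, so the only constraints on $v$ are: avoid $\tau(w)$ and $\tau_o(w)$ for each $3^+$-neighbor $w$ (that is $2(d-d_2(v))$ colors), and avoid $\tau(w_i)$ for each $i$ (that is $d_2(v)$ colors, needed so that later each $u_i$ will have two differently-colored neighbors). This totals exactly $2d(v)-d_2(v)$, which under the negated hypothesis is at most $22$, so a color for $v$ exists. Afterward each $u_i$ is colored avoiding at most four colors ($\tau(v),\tau(w_i),\tau_o(v),\tau_o(w_i)$), and a short parity check handles oddness at $v$ via the last $u_i$ colored.

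Your route colors the $u_i$'s first with a common color $a$ and then colors $v$. But now $v$ must avoid $a$ \emph{in addition to} the $\tau(w),\tau_o(w)$ for $3^+$-neighbors and the $\tau(w_i)$ (the latter still needed, since $\tau(v)=\tau(w_i)$ would kill oddness at $u_i$). That is $2d(v)-d_2(v)+1$ forbidden colors, and under the negated hypothesis this can equal $23$. Concretely, take $d(v)=d_2(v)=22$ with all $\tau(w_i)$ distinct: then $\{a\}\cup\{\tau(w_i)\}$ has size $23$ and no color remains for $v$. Your parity fix (using two colors $a,b$ among the $u_i$) only makes this worse, and the attempt to hide $a$ inside the set $\{\tau(w),\tau_o(w)\}$ fails precisely when there are no $3^+$-neighbors.

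You actually mentioned the correct order in passing (``or the other order'') before discarding it as less clean; in fact it is the cleaner one, and it is what closes the argument. Swap the order and your proof goes through with the exact count $2d(v)-d_2(v)\le 22$.
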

\begin{proof}
Suppose the contrary.  Form $G'$ from $G$ by deleting $v$ and all adjacent
2-vertices.  By the minimality of $G$, we know that $G'$ has an odd 23-coloring
$\tau$.  We color $v$ to avoid the color on each $3^+$-vertex $w$ that is adjacent
in $G$, as well as the color on $\tau_o(w)$ for each adjacent $3^+$-vertex $w$,
and the color on the other neighbor of each adjacent 2-vertex (which is deleted in $G'$).
By assumption, $2d(v)-d_2(v)\le 22$, so we can color $v$.  Now for each 2-vertex
$w$ adjacent (in $G$) to $v$, color $w$ to avoid the colors on its two
neighbors, say $v$ and $x$, in $G$, as well as $\tau_o(v)$ and $\tau_o(x)$.
It is easy to check that this gives an odd 23-coloring of $G$, a contradiction.
\end{proof}
}

Let \Emph{$H$} be the plane graph formed from $G$ by replacing each crossing
with a ``virtual'' 4-vertex.  Now $H$ is 2-edge-connected by Claim~\ref{2ec}.
Note that no two virtual 4-vertices are adjacent in $H$.  Moreover, by
Claim~\ref{c:crossing},  $N_H(v)$ consists entirely of virtual 4-vertices for
each small vertex $v$ in $G$. 

\begin{clm}\label{3-face}
The graph $H$ has no loop or 2-face.
Every 3-face in $H$ is incident to either three big vertices or two big
vertices and one virtual 4-vertex.
\end{clm}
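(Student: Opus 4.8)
The plan is to lean on the three structural facts already recorded for $H$: it is $2$-edge-connected (Claim~\ref{2ec}), no two virtual $4$-vertices are adjacent, and $N_H(v)$ consists entirely of virtual $4$-vertices whenever $v$ is small (Claim~\ref{c:crossing}). I would also invoke the standard properties of a drawing of $G$ with the fewest crossings — no edge crosses itself and two adjacent edges of $G$ do not cross — and the basic observation that each edge of $H$ is a maximal crossing-free arc of an edge of $G$, whose two ends are either (original) vertices of $G$ or virtual $4$-vertices. Since $H$ is $2$-edge-connected, no edge is repeated on the boundary walk of a face; hence a $1$-face of $H$ would be bounded by a loop and a $2$-face by a pair of parallel edges, so the first sentence of the claim reduces to showing that $H$ has no loop and no pair of parallel edges.

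For loops: each edge of $H$ joins two consecutive events (an endpoint or a crossing) along an edge of $G$, and consecutive events are distinct points; in particular a crossing-free edge of $G$ becomes a single edge of $H$ between its two distinct ends, so $H$ has no loop. For parallel edges $e,e'$ joining vertices $p$ and $q$ of $H$, I would split into cases on the types of $p$ and $q$. If both are virtual, they cannot be adjacent at all. If $p$ is virtual and $q$ is original, then $p$ is the crossing of two edges $g_1,g_2$ of $G$, and the arcs $e,e'$ are two of the four arcs of $g_1\cup g_2$ emanating from $p$; if they both lie on $g_1$ (or both on $g_2$) then that edge of $G$ would have both endpoints equal to $q$, and if one lies on $g_1$ and the other on $g_2$ then $g_1$ and $g_2$ share the endpoint $q$ and still cross at $p$ — each possibility is impossible. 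If both $p,q$ are original, then each of $e,e'$ is a crossing-free edge of $G$ joining $p$ and $q$, contradicting simplicity of $G$.

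For the second sentence, let $f$ be a $3$-face of $H$. By $2$-edge-connectedness together with the absence of loops and parallel edges, the boundary of $f$ is a triangle $abc$ on three distinct, pairwise adjacent vertices. At most one of $a,b,c$ is virtual, since no two virtual vertices are adjacent. Moreover, none of $a,b,c$ is a small vertex of $G$: if, say, $a$ were small, then $N_H(a)$ would be entirely virtual, forcing both $b$ and $c$ to be virtual $4$-vertices — but $b$ and $c$ are adjacent, contradicting that no two virtual vertices are adjacent. Hence each of $a,b,c$ is a big vertex or a virtual $4$-vertex, and at most one is virtual; this gives exactly the two stated possibilities (three big vertices, or two big vertices and one virtual $4$-vertex).

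I do not expect a serious obstacle here. The only step that needs genuine care is ruling out a pair of parallel edges running between a virtual vertex and an original vertex, and that is precisely the point where the minimum-crossing properties of the drawing (adjacent edges of $G$ do not cross) are used, rather than merely the definition of $1$-planarity.
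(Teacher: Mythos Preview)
Your proposal is correct and follows essentially the same approach as the paper: rule out loops and digons by case analysis on whether the endpoints are original or virtual (using, in the mixed case, that adjacent edges of $G$ do not cross in a minimum-crossing drawing, which is exactly the paper's ``uncross'' argument), and handle $3$-faces via the observation that a small vertex has only virtual, hence pairwise non-adjacent, neighbors in $H$. The one minor variation is that you prove the slightly stronger statement that $H$ has no parallel edges at all (not merely no $2$-face), which then lets you conclude cleanly that every $3$-face is bounded by a genuine triangle; the paper leaves this implicit.
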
 

\begin{proof} 
Clearly, $H$ cannot have any loops, since $G$ is loopless.  Suppose that $H$
contains a 2-face, with boundary $vw$.  If $v$ and $w$ are both vertices of $G$,
then $G$ contains parallel edges, a contradiction.  As noted above, every two
virtual 4-vertices of $H$ are non-adjacent.  So assume that $v\in V(G)$ and $w$
is a virtual 4-vertex.  Since $vw$ is a 2-face in $H$, vertex $w$ arises in $H$
from a crossing of two edges in $G$ that are both incident to $v$.  However, we
can ``uncross'' these edges, somewhat akin to a simpler version of what we show
in Figure~\ref{f:sixfour}, yielding an embedding of $G$ with fewer edge crossings.
This contradicts our choice of $G$, which proves the first statement.

Let $v$ be a small vertex in $G$.  As observed above,  every edge incident to
$v$ has a crossing by Claim~\ref{c:crossing}.  So every neighbor
of $v$ in $H$ is a virtual 4-vertex.  Thus, all neighbors of $v$ are
pairwise non-adjacent in $H$; so $v$ is not incident to any 3-face in $H$.
This proves the second statement.
\end{proof}

\begin{clm}\label{c:5face}
Every 2-vertex in $H$ is incident to a $5^+$-face and to another $4^+$-face.
\end{clm}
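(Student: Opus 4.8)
The plan is to first pin down the local picture around a $2$-vertex of $H$ and then treat the two assertions of the claim separately. Let $v$ be a $2$-vertex of $H$. Every virtual vertex has degree $4$ and every vertex of $G$ keeps its degree in $H$, so $v$ is a $2$-vertex of $G$, hence small; by Claim~\ref{c:bign} its neighbors in $G$ are big (so not virtual), and by Claim~\ref{c:crossing} both edges at $v$ are crossed. Therefore, in $H$, the two neighbors of $v$ are virtual $4$-vertices $x_1$ and $x_2$, and $x_1\neq x_2$ since $H$ is simple. Finally, since $H$ is $2$-edge-connected by Claim~\ref{2ec}, the degree-$2$ vertex $v$ is not a cut vertex, so it is incident to exactly two distinct faces $f_1$ and $f_2$.

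Next I would dispense with the easy part. By Claim~\ref{3-face}, $H$ has no $2$-face, and every $3$-face of $H$ is incident only to big vertices and virtual $4$-vertices; but $v$ is neither big nor virtual, so no $3$-face is incident to $v$. Hence $f_1$ and $f_2$ are both $4^+$-faces, and once we show that one of them is a $5^+$-face, the other one is the required ``other $4^+$-face''.

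For the main part I would argue by contradiction, assuming both $f_1$ and $f_2$ are $4$-faces. Since $H$ is simple and $2$-edge-connected, each of $f_1,f_2$ is bounded by a $4$-cycle, and these two cycles share the path $x_1vx_2$; write $f_1=vx_1a_1x_2$ and $f_2=vx_1a_2x_2$. Here $a_1\neq a_2$, since otherwise this single $4$-cycle would bound both faces and $H$ would be exactly that $4$-cycle, contradicting $d_H(x_1)=4$. Now I would use the drawing: the vertices $a_1,a_2,v$ and the fourth neighbor $c_1$ of $x_1$ are all real (neighbors of a virtual vertex are not virtual), and the crossing at $x_1$ is between two edges of $G$ whose four half-edges alternate in the rotation around $x_1$; since $f_1$ (resp.\ $f_2$) is incident at $x_1$ to the two half-edges $x_1v$ and $x_1a_1$ (resp.\ $x_1a_2$), those half-edges are consecutive in that rotation and hence lie on \emph{different} edges of $G$. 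It follows that the two edges of $G$ crossing at $x_1$ are $vc_1$ and $a_1a_2$. Applying the same reasoning at $x_2$ shows that $a_1a_2$ is also one of the two edges of $G$ crossing at $x_2$. Thus $a_1a_2\in E(G)$ is crossed at the two distinct crossing points $x_1$ and $x_2$, contradicting $1$-planarity. Hence at least one of $f_1,f_2$ is a $5^+$-face, which completes the proof.

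The step I expect to be the main obstacle is this last one: the point is not merely that $f_1$ and $f_2$ are small faces, but that one must read off from the rotation at the virtual vertices that the ``far'' half of the edge of $G$ through $v$ cannot be $x_ia_j$, thereby forcing $a_1a_2$ to be a genuine edge of $G$ crossed behind both $x_1$ and $x_2$; the two degenerate configurations ($x_1=x_2$ and $a_1=a_2$) must also be ruled out, each in a single line.
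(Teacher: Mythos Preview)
Your proof is correct and follows essentially the same route as the paper's: reduce to the case where both incident faces are $4$-faces $vx_1a_1x_2$ and $vx_1a_2x_2$, and derive a contradiction from the resulting local configuration at the two virtual vertices. The only cosmetic difference is in the punchline: you conclude that the single edge $a_1a_2\in E(G)$ is crossed at both $x_1$ and $x_2$, contradicting $1$-planarity, whereas the paper phrases the same obstruction as ``$G$ has a multiple edge between $a_1$ and $a_2$'' (i.e., if each crossing is to be legitimate for a $1$-planar drawing, the two crossings must come from two distinct copies of $a_1a_2$, violating simplicity). Your write-up is in fact more explicit than the paper's, since you spell out the rotation argument at the virtual vertices that identifies $a_1a_2$ as an actual edge of $G$; the paper asserts this step without justification.
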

 
\begin{proof} 
Let $x$ be a 2-vertex in $H$ and let $y$ and $z$ be the neighbors of $x$ in $H$.
By Claim~\ref{c:crossing},   both $y$ and $z$ are virtual 4-vertices. Recall
that $H$ is 2-edge-connected.   Let $f_1$ and $f_2$ be the two distinct faces
that contain the  vertex $x$.  So $x, y, z\in V(f_1)\cap V(f_2)$.  Since
$yz\notin E(H)$, both $f_1$ and $f_2$ must be $4^+$-faces.  Suppose
both $f_1$ and $f_2$ are 4-faces.  Let $u, w$ be the remaining vertices of $f_1$
and $f_2$, respectively.  Now each vertex in $\{u, w\}$ is adjacent in $H$ to
both $y$ and $z$.  But this implies that $G$ has a multiple edge between $u$ and
$w$, a contradiction.
\end{proof}

We use discharging on $H$ to reach a contradiction. Let
\Emph{$F(H)$} denote the set of all faces of $H$. For each vertex $v\in V(H)$
and each face $f\in{F(H)}$,   let  $\ch(v):= d_{H}(v)-4$\aside{$\ch(v)$} be the initial
charge of $v$ and $\ch(f):= d(f)-4$\aside{$\ch(f)$} be the initial charge of $f$.   By 
Euler's formula, the total charge of $H$ is  
\begin{align*}
\ch(H):&=\sum_{v \in V(H)} (d(v) - 4) + \sum_{f \in F(H)} (d(f) - 4) \\
&= -4( |H| - e(H) + |F(H)|)\\
&= -8.
\end{align*}

Note that $\ch(v)<0$ if and only if $d_H(v)=2$; similarly, $\ch(f)<0$ if and only if
$f$ is a 3-face.  We will redistribute the charge on $H$ according to the
following three discharging rules. 
 
\begin{itemize}
\item[(R1)] Every $5^+$-face  splits its charge equally among all incident 2-vertices.  
\item[(R2)] Every big vertex sends $\frac{1}{2}$ to each incident 3-face.
\item[(R3)] Every big vertex  $v$ sends $\frac{1}{2}$ to each 2-vertex in
$N_{{G}}(v)$.
\end{itemize}

Let $\ch^*$\aside{$\ch^*$} denote the new charge, after applying (R1)--(R3).
Note that $\ch^*(H)=\ch(H)=-8$.  We reach a contradiction by showing that
$\ch^*(v)\ge0$  and $\ch^*(f)\ge0$ for every vertex $v\in V(H)$ and every face
$f\in{F(H)}$. 
Every vertex $v$ that is small but not a 2-vertex has $\ch^*(v)=d(v)-4\ge 0$.
Every 4-face $f$ has $\ch^*(f)=d(f)-4=0$.  And every $5^+$-face $f$ has
$\ch(f)=d(f)-4>0$, so $\ch^*(f)\ge 0$, by (R1).
By Claim~\ref{3-face}, every 3-face $f$ is incident to at least two big
vertices.  So by (R2),  $\ch^*(f)\ge\ch(f)+ \frac12+ \frac12 = 0$. 
Thus, it suffices to consider only 2-vertices and big vertices.

\begin{figure}[htb]
\centering
\begin{tikzpicture}[thick, rotate=0]

\tikzstyle{redV}=[shape = circle, minimum size = 4.0pt, inner sep = 0pt,
outer sep = 0pt, draw=red, fill=red]

\tikzstyle{blackV}=[shape = circle, minimum size = 5.25pt, inner sep = 0pt,
outer sep = 0pt, draw, fill=black, semithick]
\tikzstyle{sStyle}=[shape = rectangle, minimum size = 4.5pt, inner sep = 0pt,
outer sep = 0pt, draw, fill=white, semithick]
\tikzstyle{lStyle}=[shape = circle, minimum size = 4.5pt, inner sep = 0pt,
outer sep = 0pt, draw=none, fill=none]
\tikzset{every node/.style=blackV}

\begin{scope}[rotate=10]
\foreach \i/\type in {1/redV, 2/blackV, 3/redV, 4/blackV, 5/redV, 6/blackV,
7/redV, 8/blackV, 9/redV, 10/blackV, 11/redV, 12/blackV, 13/redV,
14/blackV, 15/redV, 16/blackV, 17/redV, 18/blackV}
\draw (\i*20:2cm) node[\type] (v\i) {};

\foreach \i in {1,...,18}
\draw (0,0) -- (v\i);

\foreach \i/\j in {18/2, 2/4, 4/6, 6/8, 8/10, 10/12, 12/14, 14/16, 16/18}
\draw (v\i) edge[bend left=20] (v\j);

\draw (0,0) node[blackV] {};
\end{scope}

\begin{scope}[xshift=-2.5in, rotate=-76]

\foreach \i/\type in {0/blackV, 1/redV, 2/blackV, 3/redV, 4/blackV, 5/redV,
6/blackV, 7/redV, 8/blackV, 9/redV, 10/blackV, 11/redV, 12/blackV, 13/redV,
14/blackV, 15/redV, 16/blackV, 17/redV, 18/blackV}
\draw (\i*18.5:2cm) node[\type] (v\i) {};

\foreach \i in {1,...,17}
\draw (0,0) -- (v\i);

\foreach \i/\j in {0/2, 2/4, 4/6, 6/8, 8/10, 10/12, 12/14, 14/16, 16/18}
\draw (v\i) edge[bend left=20] (v\j);

\draw (0,0) node[blackV] {};

\end{scope}
\end{tikzpicture}

\caption{A 17-vertex and 18-vertex giving away maximum charge; red vertices
are 2-vertices.
\label{17/18-vertex-fig}}
\end{figure}
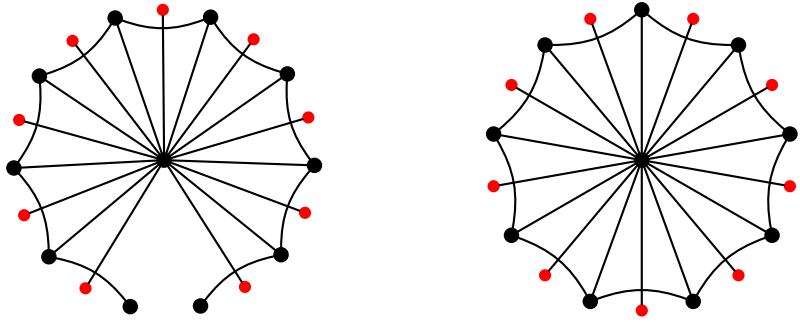
\bigskip

\begin{clm}\label{big}
If $v$ is a big vertex, then $\ch^*(v) \geq 0$.
\end{clm}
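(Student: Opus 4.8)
The plan is to bound the charge that a big vertex $v$ gives away via (R2) and (R3), and show this never exceeds its initial charge $d(v)-4$. By (R2) and (R3), $v$ sends $\frac12$ to each incident $3$-face and $\frac12$ to each adjacent $2$-vertex (in $G$), so $\ch^*(v)\ge d(v)-4-\frac12\big(t_3(v)+d_2(v)\big)$, where $t_3(v)$ is the number of $3$-faces of $H$ incident to $v$. Thus it suffices to prove $t_3(v)+d_2(v)\le 2d(v)-8$. First I would handle the case $d_2(v)=0$ quickly: trivially $t_3(v)\le d_H(v)=d(v)$ (each face incident to $v$ uses two consecutive edges at $v$), and since $d(v)\ge 12$ we get $t_3(v)\le d(v)\le 2d(v)-8$, which more than suffices. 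So assume $d_2(v)\ge 1$; then Claim~\ref{borrowed-lem} gives $2d(v)\ge d_2(v)+23$, i.e. $d(v)\ge 13$ and $d_2(v)\le 2d(v)-23$.

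Next I would get a good bound on $t_3(v)$ in terms of $d_2(v)$. The key structural input is Claim~\ref{c:5face}: every $2$-vertex $x$ of $H$ adjacent to $v$ is incident to a $5^+$-face and another $4^+$-face, hence $x$ lies on \emph{no} $3$-face. Now consider the cyclic sequence of edges around $v$ in $H$: a $2$-vertex neighbor $x$ of $v$ ``blocks'' the two face-corners at $v$ on either side of $x$ from being $3$-faces (each such corner's face contains $x$, which is on no $3$-face, unless that face is a $2$-face, which is excluded by Claim~\ref{3-face}). However, I must be careful: the $2$-vertices counted by $d_2(v)$ are $2$-vertices \emph{of $G$}, and by Claim~\ref{c:crossing} every edge at such a vertex is crossed, so in $H$ each such $2$-vertex of $G$ sits behind a virtual $4$-vertex and is \emph{not} itself adjacent to $v$ in $H$. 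So the right object to look at is the face structure of $H$ around $v$, and I need to translate $d_2(v)$ (a $G$-quantity) into a statement about $H$. The cleanest route: for each $2$-vertex $x$ of $G$ adjacent to $v$, the edge $vx$ of $G$ is crossed, so in $H$ there is a virtual $4$-vertex $c_x$ with $v c_x\in E(H)$, and the two $H$-faces at the corner $v c_x$ each contain $c_x$; moreover $x$ is a $2$-vertex of $H$ incident to those same faces (walking through $c_x$), so by Claim~\ref{c:5face} neither of those two $H$-faces is a $3$-face. Distinct $2$-vertices of $G$ give distinct virtual neighbors $c_x$ in $H$ (a virtual vertex has exactly four $H$-neighbors, the four ``ends'' of the two crossing edges), so we obtain at least $d_2(v)$ distinct edges $vc_x$ at $v$ in $H$, each of whose two incident corners at $v$ is a non-$3$-face corner.

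From this, a counting argument on the $d_H(v)$ corners at $v$ yields $t_3(v)\le d_H(v)-d_2(v)$ when the blocked corners around distinct $c_x$'s are disjoint, and more carefully $t_3(v)\le d_H(v)-\lceil d_2(v)/1\rceil$ in general — I expect the honest bound to be roughly $t_3(v)\le d_H(v)-d_2(v)$, possibly with a small additive slack, and then combining with $d_H(v)\ge d_G(v)$ is the wrong direction, so instead I would note $d_H(v)$ can be \emph{larger} than $d_G(v)$ (virtual vertices are extra neighbors), which actually helps bound $t_3(v)$ relative to $d_2(v)$ but hurts the ``$2d(v)-8$'' target since that target is in terms of $d_G(v)$. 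The main obstacle, then, is exactly this bookkeeping: relating $t_3(v)$ and $d_2(v)$ both measured correctly (in $H$ vs.\ $G$), and squeezing out the inequality $t_3(v)+d_2(v)\le 2d(v)-8$ using $d_2(v)\le 2d(v)-23$ from Claim~\ref{borrowed-lem} together with the crude bound $t_3(v)\le d_H(v)$. Since every edge at $v$ in $H$ lies on two corners and $v$ is incident to at most $d_H(v)$ faces, and since $d_H(v)\le d_G(v)+d_2(v)$ is false in general but $t_3(v)\le d_G(v)$ should hold once we observe that $3$-faces at $v$ only involve non-virtual or single-virtual corners bounded by the \emph{real} degree — I would prove $t_3(v)\le d_G(v)$ directly and then $t_3(v)+d_2(v)\le d(v)+(2d(v)-23)=3d(v)-23\le 2d(v)-8$ iff $d(v)\le 15$, which fails for large $v$. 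Hence the crude bound is insufficient and the real work is the sharper estimate $t_3(v)\le d_G(v)-d_2(v)+O(1)$; I anticipate the figure (the $17$- and $18$-vertex extremal examples) encodes precisely the tight case, where alternating $2$-vertices and big vertices around $v$ force $t_3(v)$ down to about $d(v)-d_2(v)$, giving $t_3(v)+d_2(v)\approx d(v)\le 2d(v)-8$. Formalizing that each $2$-vertex of $G$ at $v$ ``kills'' at least one potential $3$-face corner at $v$ in $H$ — carefully, via the two $4^+$-faces guaranteed by Claim~\ref{c:5face} — is the crux, and everything else is arithmetic.
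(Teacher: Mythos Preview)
Your approach for small degree is essentially the paper's: the crude bounds $t_3(v)\le d_H(v)$ and (via Claim~\ref{borrowed-lem}) $d_2(v)\le 2d(v)-23$ give $t_3(v)+d_2(v)\le 3d(v)-23\le 2d(v)-8$ precisely when $d(v)\le 15$. (Two small corrections: from $d_2(v)\ge 1$ you only get $d(v)\ge 12$, not $13$; and for every $v\in V(G)$ one has $d_H(v)=d_G(v)$, since each edge of $G$ at $v$ becomes exactly one edge of $H$ at $v$, either to the same neighbor or to the virtual vertex at its crossing---virtual vertices are \emph{replacement} neighbors, not extra ones.)

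The genuine gap is the case $d(v)\ge 16$. Your key structural claim---that for a $2$-vertex $x\in N_G(v)$ the two $H$-faces flanking the edge $vc_x$ contain $x$ and hence are $4^+$-faces---is false. At the virtual vertex $c_x$ the cyclic order of neighbors is $v,a,x,b$ (where $ab$ is the edge crossing $vx$), so the two faces at $v$ on either side of $vc_x$ contain $a$ and $b$ respectively, not $x$. Both of these can be $3$-faces. Indeed, the configurations in Figure~\ref{17/18-vertex-fig} have $t_3(v)=d(v)$ while $d_2(v)\approx d(v)/2$, so your target inequality $t_3(v)\lesssim d(v)-d_2(v)$ is simply not true; no argument along those lines can close the gap. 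The paper instead uses an averaging trick: send $3/4$ to each $H$-neighbor $w$ of $v$, and let $w$ forward charge to the relevant $3$-faces and $2$-vertex. A virtual $w$ is responsible for at most one $2$-vertex (the other end of the edge through $v$ crossed at $w$) and for at most two $3$-faces incident with $vw$; since each such $3$-face has a second big vertex (Claim~\ref{3-face}), $1/8$ from $w$ together with $3/8$ from that big neighbor covers the needed $1/2$. Thus the total sent is at most $3d(v)/4$, giving $\ch^*(v)\ge d(v)/4-4\ge 0$ for $d(v)\ge 16$. This exploits that a $2$-vertex and the two flanking $3$-faces \emph{share} the same virtual neighbor of $v$, rather than trying to make them exclude one another.
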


\begin{proof} 
Let $v$ be a big vertex.  
{
If $d(v)\le 15$, then Claim~\ref{borrowed-lem} implies that $d_2(v)\le
2d(v)-23$.  So $\ch^*(v)\ge d(v)-4-d(v)/2-d_2(v)/2\ge
d(v)/2-4-(d(v)-23/2)=15/2-d(v)/2\ge 0$.
Now assume instead that $d(v)\ge 16$.
}
Figure~\ref{17/18-vertex-fig} shows two examples.
To bound the total charge that $v$ gives away by (R2)
and (R3), we simulate these rules as follows: $v$ gives $3/4$ to each
neighbor $w$ in $H$; if $w$ is not a virtual 4-vertex, then $w$ gives $3/8$ to
each 3-face $f$ in $H$ that is incident with both $v$ and $w$; if $w$ is a
virtual 4-vertex, then $w$ gives $1/8$ to each 3-face in $H$ that is
incident with both $v$ and $w$, and gives $1/2$ to each 2-vertex adjacent to $v$
that helped form $w$.  It is easy to check that each adjacent 2-vertex and incident
3-face get at least as much charge from $v$ with these simulated rules as they
do via (R2) and (R3).  Thus, $\ch^*(v)\ge d(v)-4-3d(v)/4\ge 0$, since $d(v)\ge 16$.
\end{proof}

\begin{clm} 
If $v$ is a 2-vertex, then $\ch^*(v) \geq 0$.
\end{clm}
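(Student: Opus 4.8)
Let $v$ be a $2$-vertex of $H$; then $\ch(v)=d_H(v)-4=-2$, so I must show that $v$ receives total charge at least $2$. Since $v$ is a genuine vertex of $G$ we have $d_G(v)=2$, and by Claim~\ref{c:bign} both neighbours of $v$ in $G$ are big, so by (R3) vertex $v$ receives exactly $\frac12+\frac12=1$. By Claim~\ref{c:5face} the two faces $f_1,f_2$ of $H$ incident to $v$ are both $4^+$-faces and are not both $4$-faces, so I may assume $f_1$ is a $5^+$-face. It remains to show that $v$ receives at least $1$ more, all of it via (R1) from incident $5^+$-faces.

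First I would bound the number of $2$-vertices on a face. By Claim~\ref{c:crossing} every neighbour in $H$ of a small vertex is a virtual $4$-vertex, so $2$-vertices are pairwise non-adjacent in $H$; hence on the boundary of a $k$-face consecutive $2$-vertices are separated by another vertex, giving at most $\lfloor k/2\rfloor$ incident $2$-vertices, and a $5$-face carries at most one (two $2$-vertices on a $5$-cycle would be at distance $2$ and would force two non-adjacent virtual $4$-vertices to be consecutive). Under (R1), then, a $5$-face gives its incident $2$-vertex (if any) charge $1$; a $k$-face with $k\ge 7$ gives each incident $2$-vertex at least $(k-4)/\lfloor k/2\rfloor\ge 1$; and a $6$-face gives each incident $2$-vertex at least $\tfrac23$, and gives $1$ when it carries at most two of them. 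Therefore $v$ receives at least $1$ from $f_1$ alone unless $f_1$ is a $6$-face carrying exactly three $2$-vertices; and in that exceptional case $v$ still gets at least $\tfrac23$ from $f_1$, plus at least $\tfrac23$ more from $f_2$ unless $f_2$ is a $4$-face, for a total of at least $1$. So we are done except when $f_2$ is a $4$-face and $f_1$ is a $6$-face carrying exactly three $2$-vertices, and I would rule this out.

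In that remaining case, write $\partial f_1$ as the $6$-cycle $v_1w_1v_2w_2v_3w_3$ with $v=v_1$, where $v_1,v_2,v_3$ are $2$-vertices and $w_1,w_2,w_3$ are virtual $4$-vertices, and write $\partial f_2=v_1w_1pw_3$. Inspecting the rotations at the crossings $w_1$ and $w_3$ exactly as in the proof of Claim~\ref{c:5face} (a face cannot use both half-edges of a single edge at a crossing) shows that $p$ is a genuine, hence big, vertex and that $v_2p,v_3p\in E(G)$, where $v_2p$ is crossed at $w_1$ by one edge $v_1a$ of $v_1$ and $v_3p$ is crossed at $w_3$ by the other edge $v_1a'$; if $v_2b'$ and $v_3c$ denote the remaining edges at $v_2$ and $v_3$, then $v_2b'$ and $v_3c$ cross each other at $w_2$. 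By Claim~\ref{c:crossing} each of the six edges $v_1a,v_1a',v_2p,v_2b',v_3p,v_3c$ is crossed, and by $1$-planarity each is crossed exactly once; since the crossings $w_1,w_2,w_3$ already place one crossing on each of these six edges, these six edges take part in exactly the three crossings $w_1,w_2,w_3$ and in no others.

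Now I would delete $v_1,v_2,v_3$ (and their six edges) from the drawing. The three crossings disappear, so the faces around this region merge into a single face $F$, and a (somewhat tedious) check of the rotations at $w_1,w_2,w_3$ shows that the neighbours $a,p,a',b',c$ of $v_1,v_2,v_3$ appear on $\partial F$ in this cyclic order (with the evident degenerations when some of $a,a',b',c$ coincide). The six deleted edges together with $v_1,v_2,v_3$ span a forest, and this cyclic order lets one re-draw them inside $F$ with only two crossings: route $v_1a\cup v_1a'$ as a simple arc across $F$ from $a$ to $a'$ — this separates $p$ from $\{b',c\}$ inside $F$ — then place $v_2,v_3$ on the $\{b',c\}$-side, keep $v_2b'$ and $v_3c$ on that side, and let only $v_2p$ and $v_3p$ cross the arc, once each and on opposite halves. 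Since nothing outside $F$ changes, this produces a drawing of $G$ with one fewer crossing, contradicting the choice of $G$. Hence the bad case does not occur, so $\ch^*(v)\ge 0$; as every vertex and face of $H$ then has non-negative charge while $\ch^*(H)=\ch(H)=-8$, this contradiction finishes the proof of Theorem~\ref{t:mainthm}. The step I expect to be the main obstacle is pinning down the cyclic order of $a,p,a',b',c$ on $\partial F$ (and the bookkeeping when some of these coincide or when a face boundary fails to be a simple cycle), since the two-crossings bound for the re-drawing, and hence the whole argument, depends on it.
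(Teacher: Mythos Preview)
Your argument is correct and follows the paper's proof almost exactly: the same case analysis on face sizes, the same bounds on the number of $2$-vertices per face, and the same identification of the lone bad configuration (a $6$-face with three $2$-vertices whose other side at $v$ is a $4$-face), resolved by producing a $1$-plane drawing with fewer crossings.

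The one place the paper is slicker is precisely the step you flag as the main obstacle. Rather than deleting $v_1,v_2,v_3$, analysing the cyclic order of $a,p,a',b',c$ on the merged face $F$, and redrawing, the paper simply \emph{swaps the positions of $v_2$ and $v_3$} in the existing drawing. After the swap, the edges $v_2p$ and $v_3p$ still cross the two edges at $v_1$ (just the other way round), but the edges $v_2b'$ and $v_3c$ no longer cross each other, so the crossing at $w_2$ disappears while $w_1,w_3$ remain; net one fewer crossing. This local swap avoids all the bookkeeping about $\partial F$ and the degenerate cases you were worried about, and is worth adopting in place of your delete-and-redraw.
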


\begin{proof}  
Let $v$ be a $2$-vertex in $H$.  Since $H$ is 2-edge-connected, let  $f,
f'\in{F(H)}$ be the two faces incident to $v$.    Let $a, b$ be
the neighbors of $v$ in $G$.  Figure~\ref{f:sixfour} shows a special case.
By Claim~\ref{c:bign},  both $a$ and $b$ are big
vertices. By (R3), each of $a$ and $b$ sends  a $\frac{1}{2}$ to
$v$, and $\ch(v) = -2$, so we need to show that $v$ receives a total 
of at least 1 from $f$ and $f'$.
By Claim~\ref{c:5face},  we may assume that $f$ is a $5^+$-face and $f'$ is a
$4^+$-face.  If $f$ is a 5-face, then $f$ is incident to at most one $2$-vertex
(since every neighbor of each 2-vertex is a virtual $4$-vertex and no two
virtual $4$-vertices are adjacent), so $f$ sends $1$ to $v$ by 
(R1) and we are done.
If $f$ is a 7-face, then $f$ is incident to at most two $2$-vertices, so $f$
sends $v$ at least $\frac{3}{2}$, and we are done.
If $f$ is an $8^+$-face, then $f$ is incident to at most $\left\lfloor
\frac{d(f)}{2} \right\rfloor$ $2$-vertices, and   
\[ \ch(f)=d(f) - 4 \geq
d(f) - \left\lceil \frac{d(f)}{2} \right\rceil \geq \left\lfloor \frac{d(f)}{2}
\right\rfloor, \] so $f$ sends at least 1 to $v$ and we are done.
Thus, we assume that $d(f) = 6$.  

Recall that $f'$ is a $4^+$-face.
If $f'$ is a $5$-face or a $7^+$-face, then, as before, we are done;  and if
$f'$ is a $6$-face, then each of $f$ and $f'$ sends at least
$\frac{2}{3}$ to $v$, for a total of $\frac{4}{3}$.
So we assume that $f'$ is a $4$-face.  We claim that $f$ is incident
to at most two $2$-vertices.  Suppose $f$ is incident to three $2$-vertices.
Let $u$ and $w$ be the remaining $2$-vertices on $f$, let  $z$ be the common
neighbor of $u$ and $w$ on $f$, and let $c$ be the vertex on $f'$ that is
neither $v$ nor a virtual $4$-vertex. See Figure~\ref{f:sixfour}. Note that
$z$ is a virtual $4$-vertex.  
Now $u$ and $w$ are each adjacent in $G$ to $c$, with edges $uc$ and
$wc$ each crossing an edge incident to $v$, with the remaining edges
incident to $u$ and $w$ crossing each other.
Now by switching $u$ and $w$, we create a $1$-planar embedding of $G$
with fewer crossings, contradicting our choice of the embedding of $G$.
Thus, $f$ has at most two incident $2$-vertices, as claimed.  So $f$ sends $v$
at least $1$.  Hence, $\ch^*(v)\ge 0$, as desired.
\end{proof}

\begin{figure}[htb]

\centering
\begin{tikzpicture}[thick, scale=.7]

\tikzstyle{blackV}=[shape = circle, minimum size = 6.25pt, inner sep = 0pt,
outer sep = 0pt, draw, fill=black, semithick]
\tikzstyle{redV}=[shape = circle, minimum size = 4.25pt, inner sep = 0pt,
outer sep = 0pt, draw=red, fill=red]
\tikzstyle{blueV}=[shape = circle, minimum size = 5.5pt, inner sep = 0pt,
outer sep = 0pt, draw=blue!99!black, fill=blue!99!black]
\tikzstyle{uStyle}=[shape = rectangle, minimum size = 4.5pt, inner sep = 0pt,
outer sep = 0pt, draw=none, fill=none]

\def\off{4mm}

\begin{scope}
\foreach \i in {1,2}
\draw (\i*160-150:1.0cm) node[redV] (v\i) {};

\draw (270:1.0cm) node[redV] (v3) {};

\foreach \i/\ang in {4/-10,5/190,6/270}
\draw (\ang:2.65cm) node[blackV] (v\i) {};

\foreach \i/\ang in {7/115,8/65}
\draw (\ang:2.325cm) node[blackV] (v\i) {};

\draw (v8) -- (v2) -- (v6) -- (v1) -- (v7);
\draw (v4) -- (v3) -- (v5);

\draw (90:1.170cm) node[blueV] (v9) {};
\draw (235:1.05cm) node[blueV] {};
\draw (305:1.05cm) node[blueV] {};

\foreach \i in {1,2,3}
\draw (v\i) node[redV] {};

\foreach \i/\name in {1/w,2/u,3/v,4/b,5/a,9/z}
\draw (v\i) ++(0,\off) node[uStyle] {\footnotesize{$\name$}};

\draw (v6) ++(0,-\off) node[uStyle] {\footnotesize{$c$}};

\draw (90:.25cm) node[uStyle] {\footnotesize{$f$}};
\draw (270:1.65cm) node[uStyle] {\footnotesize{$~f'$}};
\end{scope}

\begin{scope}[xshift=3in]
\foreach \i in {1,2}
\draw (\i*160-150:1.0cm) node[redV] (v\i) {};

\draw (270:1.0cm) node[redV] (v3) {};

\foreach \i/\ang in {4/-10,5/190,6/270}
\draw (\ang:2.65cm) node[blackV] (v\i) {};

\foreach \i/\ang in {7/115,8/65}
\draw (\ang:2.325cm) node[blackV] (v\i) {};

\draw (v8) -- (v2) -- (v6) -- (v1) -- (v7);
\draw (v4) -- (v3) -- (v5);

\foreach \i in {1,2,3}
\draw (v\i) node[redV] {};

\foreach \i/\name in {1/w,2/u,3/v,4/b,5/a,9/z}
\draw (v\i) ++(0,\off) node[uStyle] {\footnotesize{$\name$}};

\draw (v6) ++(0,-\off) node[uStyle] {\footnotesize{$c$}};
\end{scope}

\begin{scope}[xshift=6in]
\foreach \i in {1,2}
\draw (\i*160-150:1.0cm) node[redV] (v\i) {};

\draw (270:1.0cm) node[redV] (v3) {};

\foreach \i/\ang in {4/-10,5/190,6/270}
\draw (\ang:2.65cm) node[blackV] (v\i) {};

\foreach \i/\ang in {7/115,8/65}
\draw (\ang:2.325cm) node[blackV] (v\i) {};

\draw (v2) -- (v6) -- (v1);
\draw (v8) edge[bend right=60, looseness=1.15] (v1);
\draw (v7) edge[bend left=60, looseness=1.15] (v2);
\draw (v4) -- (v3) -- (v5);

\foreach \i in {1,2,3}
\draw (v\i) node[redV] {};

\foreach \i/\name in {1/u,2/w,3/v,4/b,5/a,9/z}
\draw (v\i) ++(0,\off) node[uStyle] {\footnotesize{$\name$}};

\draw (v6) ++(0,-\off) node[uStyle] {\footnotesize{$c$}};
\end{scope}

\end{tikzpicture}

\caption{From left to right: the vertex $v$, its incident 4-face and 6-face in
$H$, and their incident vertices; the same configuration
in $G$; a 1-plane embedding of $G$ with fewer crossings.
\label{f:sixfour}
}
\end{figure}
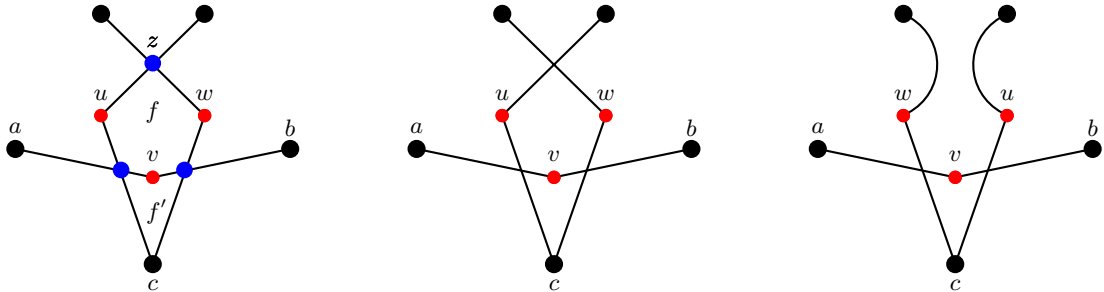

We have shown that every vertex and face of $H$ finishes with nonnegative charge. 
So $-8=\ch(H)=\ch^*(H)\ge 0$, which is a contradiction.
\qed

\small{

}

\end{document}